\title[A new Andrews--Crandall-type identity]{A new Andrews--Crandall-type identity and the number of integer solutions to $x^2+2y^2+2z^2=n$}
\author{Mariia Dospolova}
\author{Ekaterina Kochetkova}
\author{Eric T. Mortenson}
\address{Department of Mathematics and Computer Science, Saint Petersburg State University, Saint Petersburg, 199034, Russia}
\email{dospolova.maria@yandex.ru}
\email{ekaterinakochetkova20@gmail.com }
\email{etmortenson@gmail.com}
\renewcommand\theta{\vartheta}
\newtheorem{theorem}{Theorem}
\newtheorem{lemma}[theorem]{Lemma}
\newtheorem{corollary}[theorem]{Corollary}
\newtheorem{proposition}[theorem]{Proposition}
\theoremstyle{definition}
\newtheorem{definition}[theorem]{Definition}
\newtheorem{remark}[theorem]{Remark}
\numberwithin{theorem}{section} 
\numberwithin{equation}{section}
\begin{document}

\date{6 July 2023}

\subjclass[2020]{11E25, 11B65, 05A15}

\keywords{Theta functions, Appell functions, ternary quadratic forms, class numbers}

\begin{abstract}
     Using a higher-dimensional analog of an identity known to Kronecker, we discover a new Andrews--Crandall-type identity and use it to count the number of integer solutions to $x^2+2y^2+2z^2=n$.
\end{abstract}

\maketitle

\section{Introduction}

It has long been known that an identity known to Kronecker gives as special cases many classical number theory results on the representations of a number as a sum of squares \cite{Warn}.  For example we can obtain Lagrange's theorem that every positive integer can be written as the sum of four squares, and we can obtain Fermat's theorem that an odd prime number $p$ can be written as a the sum of two squares if an only if $p\equiv 1 \pmod 4$.  Are there higher-dimensional generalizations of Kronecker's identity?  

In \cite{Mo2017A}, Mortenson found a higher-dimensional analog of Kronecker's identity, and among its specializations \cite{Mo2017B} are an identity that yields Gauss's Eureka theorem that every positive integer can be written as a sum of three triangular numbers, and the Andrews--Crandall identity which counts the number of representations of $n$ of the form 
\begin{equation}
n=x^2+y^2+z^2.\label{equation:3squares}
\end{equation}  
Counting solutions to (\ref{equation:3squares}) has been well-studied, see for example \cite{G}.  Are there other specializations of the higher-dimensional analog of Kronecker's identity that yield number theoretic results?

In this paper we obtain another specialization of the higher-dimensional generalization of Kronecker's identity, where the new specialization resembles the Andrews--Crandall identity but instead counts the number of representations of $n$ of the form 
\begin{equation}
n=x^2+2y^2+2z^2.\label{equation:3squares-Alt}
\end{equation}
One easily sees that the (\ref{equation:3squares}) and (\ref{equation:3squares-Alt}) are related.  Here one notes that
\begin{align*}
x^2+2y^2+2z^2=x^2+(y-z)^2+(y+z)^2=x^2+u^2+v^2, \ u\equiv v \pmod 2.
\end{align*}
However, our methods and approach to studying (\ref{equation:3squares-Alt}) are new.  Here, we will improve the methods found in \cite{Mo2017B}, and we will develop the notion of an Andrews--Crandall-type identity.   Along the way, we will frequently need to count solutions to variations of
\begin{equation}
xy+yz+xz=n, \ \textup{where} \ x,y,z\ge 1,\label{equation:HLM}
\end{equation}
a problem that appears to have been initiated by Hermite, Liouville, and Mordell \cite{Mord1}.

\subsection{Background: an identity known to Kronecker} Let $q$ be a nonzero complex number with $|q|<1$ and define $\mathbb{C}^*:=\mathbb{C}-\{0\}$.  Recall
\begin{gather*}
(x)_n=(x;q)_n:=\prod_{i=0}^{n-1}(1-q^ix), \ \ (x)_{\infty}=(x;q)_{\infty}:=\prod_{i\ge 0}(1-q^ix),\\
(x_1,x_2,\dots,x_n;q)_{\infty}:=(x_1)_{\infty}(x_2)_{\infty}\cdots (x_n)_{\infty}.
\end{gather*}
The following identity was known to Kronecker \cite{Kron1}, \cite[pp. 309--318]{Kron2}, and \cite[pp. 70--71]{Weil}; however, Kronecker's identity is also a special case of Ramanujan's ${}_{1}\psi_{1}$-summation \cite{Warn}.  We provide the symmetric form.  For  $x,y\in \mathbb{C}^*$ where $|q|<|x|<1$ and $|q|<|y|<1$
\begin{equation}
\Big ( \sum_{r,s\ge 0}-\sum_{r,s<0} \Big)q^{rs}x^ry^s=\frac{(q)_{\infty}^2(xy,q/xy;q)_{\infty}}{(x,q/x,y,q/y;q)_{\infty}}.\label{equation:kronecker}
\end{equation}
Kronecker's identity yields many classical number theoretic results as special cases, but first we rewrite the identity in a more useful form.

Define $r_s(n)$ to be the number of representations of $n$ as a sum of $s$ squares and define the generating function
\begin{equation*}
\sum_{n\ge 0 }r_s(n)(-q)^n
=\Big ( \sum_{m\in \mathbb{Z} }(-1)^mq^{m^2}\Big )^s
=\Big ((q)_{\infty}/(-q)_{\infty} \Big )^s,
\end{equation*}
where the last equality follows from Jacobi's triple product identity:
\begin{equation}
(x)_{\infty}(q/x)_{\infty}(q)_{\infty}=\sum_{n=-\infty}^{\infty}(-1)^nq^{\binom{n}{2}}x^n,\label{equation:JTPid}
\end{equation}
Multiply both sides of (\ref{equation:kronecker}) by $(1-x)(1-y)/(1-xy)$ and rewrite to have
\begin{equation}
1+\frac{(1-x)(1-y)}{1-xy}\sum_{r,s\ge 1}q^{rs}(x^ry^s-x^{-r}y^{-s})
=\frac{(xyq)_{\infty}(q/xy)_{\infty}(q)_{\infty}^2}{(xq)_{\infty}(q/x)_{\infty}(yq)_{\infty}(q/y)_{\infty}}.\label{equation:kronecker-alt}
\end{equation}
Taking the limit $x=y\rightarrow -1$ of (\ref{equation:kronecker-alt}) yields Jacobi's four-square theorem
\begin{equation*}
r_4(n)=8\sum_{d\mathop{\raisebox{-2pt}{$\centernot\vdots$}}4, n\mathop{\raisebox{-2pt}{\vdots}}d}d,
\end{equation*}
which implies Lagrange's theorem that every positive integer is a sum of four squares.  Taking the limit $x$, $y^2\rightarrow -1$ of (\ref{equation:kronecker-alt}) gives a result of Gauss and Lagrange
\begin{equation}
r_2(n)=4(d_1(n)-d_3(n)),\label{equation:fermat}
\end{equation}
where $d_k(n)$ is the number of divisors of $n$ congruent to $k$ modulo $4$.  Identity (\ref{equation:fermat}) implies Fermat's two-square theorem.

\subsection{Background: a higher-dimensional generalization of Kronecker's identity} It turns out that there is a higher-order analog of (\ref{equation:kronecker}).  We present the symmetrized version:
\begin{theorem}\cite[Corollary $1.2$]{Mo2017A} \label{theorem:result} For  $x,y,z\in \mathbb{C}^*$ where $|q|<|x|<1$, $|q|<|y|<1$,  and $|q|<|z|<1$,
{\allowdisplaybreaks \begin{align}
\Big ( \sum_{r,s,t \ge 0}&+\sum_{r,s,t<0} \Big)q^{rs+rt+st}x^ry^sz^t\label{equation:thm-result}\\
&=  \frac{(yz,q^2/yz;q^2)_{\infty}}{(y,z,q/y,q/z;q)_{\infty}}
\frac{(q;q)_{\infty}^2}{(q^2;q^2)_{\infty}} \sum_{k\in \mathbb{Z}}\frac{(-1)^kq^{k^2}(yz)^k}{1+q^{2k}x}\notag \\
&\ \ \ \ \ + \frac{(xy,q^2/xy;q^2)_{\infty}}{(x,y,q/x,q/y;q)_{\infty}}
 \frac{(q;q)_{\infty}^2}{(q^2;q^2)_{\infty}}\sum_{k\in \mathbb{Z}} \frac{(-1)^kq^{k^2}(xy)^k}{1+q^{2k}z}\notag \\
&\ \ \ \ \ -2  \frac{(q^2;q^2)_{\infty}^3}{(x,y,z,q/x,q/y,q/z;q)_{\infty}} \frac{(xy,xz,yz,q^2/xy,q^2/xz,q^2/yz;q^2)_{\infty}}{(-x,-y,-z,-q^2/x,-q^2/y,-q^2/z;q^2)_{\infty}}\notag \\
&\ \ \ \ \ + \frac{(xz,q^2/xz;q^2)_{\infty}}{(x,z,q/x,q/z;q)_{\infty}}
 \frac{(q;q)_{\infty}^2}{(q^2;q^2)_{\infty}}  \sum_{k\in \mathbb{Z}} \frac{(-1)^kq^{k^2}(xz)^k}{1+q^{2k}y}.\notag 
\end{align}}%
\end{theorem}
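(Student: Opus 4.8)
The plan is to derive Theorem~\ref{theorem:result} from the two-variable Kronecker identity~(\ref{equation:kronecker}) by eliminating one summation variable and then re-summing. Write $F=F(x,y,z)$ for the left side of~(\ref{equation:thm-result}); it converges absolutely in the stated region and is symmetric in $x,y,z$. First I would sum the geometric series over $t$: for $r,s\ge0$ one has $\sum_{t\ge0}(q^{r+s}z)^t=(1-q^{r+s}z)^{-1}$, while for $r,s<0$ the hypothesis $|q|<|z|<1$ forces $|q^{r+s}z|>1$, so the sum over $t<0$ collapses to $-(1-q^{r+s}z)^{-1}$. This yields the reduction
\begin{equation*}
F(x,y,z)=\sum_{r,s\ge0}\frac{q^{rs}x^ry^s}{1-q^{r+s}z}-\sum_{r,s<0}\frac{q^{rs}x^ry^s}{1-q^{r+s}z},
\end{equation*}
a sign-weighted double sum that differs from the left side of~(\ref{equation:kronecker}) only by the Lambert-type factor $(1-q^{r+s}z)^{-1}$, which tends to $1$ as $z\to0$.

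The right side tells us where the exponent $q^{k^2}$ and the base $q^2$ must originate: from the diagonal $r=s=k$, where $q^{rs}=q^{k^2}$ and $1-q^{r+s}z=1-q^{2k}z$. So I would extract this diagonal, with the off-diagonal part supplying --- through~(\ref{equation:kronecker}) and Jacobi's triple product~(\ref{equation:JTPid}) --- both the theta prefactors and the sign-pattern correction that converts the bare diagonal sum into a genuine theta sum. Here the Kronecker product $\tfrac{(q;q)_\infty^2(xy,q/xy;q)_\infty}{(x,q/x,y,q/y;q)_\infty}$ enters as the leading factor, and rewriting base-$q$ products as base-$q^2$ products via $(a;q^2)_\infty(aq;q^2)_\infty=(a;q)_\infty$ should produce the term $\tfrac{(xy,q^2/xy;q^2)_\infty}{(x,y,q/x,q/y;q)_\infty}\tfrac{(q;q)_\infty^2}{(q^2;q^2)_\infty}\sum_k(-1)^kq^{k^2}(xy)^k/(1+q^{2k}z)$. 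Symmetry of $F$ then gives the analogous terms with $x$ and with $y$ in the denominators (obtained by summing over $r$ or $s$ instead of $t$), and what is left over is forced to be the fully symmetric theta quotient $-2\,\tfrac{(q^2;q^2)_\infty^3(xy,xz,yz,q^2/xy,q^2/xz,q^2/yz;q^2)_\infty}{(x,y,z,q/x,q/y,q/z;q)_\infty(-x,-y,-z,-q^2/x,-q^2/y,-q^2/z;q^2)_\infty}$.

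I expect the identification of this last term to be the main obstacle: it is a genuine theta-function addition identity, and matching the Appell--Lerch arguments and the $q\leftrightarrow q^2$ normalizations exactly is delicate. An alternative to the explicit re-summation is a uniqueness argument. From the displayed reduction, $F$ is meromorphic in $z$ on $\mathbb{C}$ with simple poles only among $\{q^m:m\in\mathbb{Z}\}$; one would check that the right side has the same poles and residues --- which forces the spurious poles of the product term (at $z\in\{-q^{2j}:j\in\mathbb{Z}\}$) to cancel those of the Appell--Lerch term carrying $z$ in its denominator, and forces the residues at $z=q$, where $F$ is regular, to cancel among the remaining three terms --- and then pin down the difference using the $z$-functional equation coming from the series. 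The catch is that this functional equation also shifts $x$ and $y$, so the uniqueness step must be run jointly in all three variables; that, together with confirming that the correction term is exactly the displayed quotient rather than something merely proportional to it, is where the real work lies.
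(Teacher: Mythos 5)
First, note that the paper does not prove this statement at all: Theorem \ref{theorem:result} is imported verbatim as \cite[Corollary 1.2]{Mo2017A}, so there is no in-paper proof to compare against. The actual proof in that reference runs through the Hecke-type double-sum machinery of Hickerson and Mortenson \cite{HM}, which expresses sums of the shape $\big(\sum_{r,s\ge0}-\sum_{r,s<0}\big)(-1)^{r+s}q^{a\binom{r}{2}+brs+c\binom{s}{2}}x^ry^s$ in terms of Appell functions $m(x,z;q)$ and theta quotients; the three Appell terms and the theta-quotient term on the right of (\ref{equation:thm-result}) come out of that structural theorem, not out of Kronecker's identity (\ref{equation:kronecker}) directly.

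Your opening reduction is correct and is indeed the natural first move: summing the geometric series in $t$ (using $|q^{r+s}z|<1$ for $r,s\ge0$ and $|q^{r+s}z|>1$ for $r,s<0$, which your hypotheses do guarantee) gives $F=\big(\sum_{r,s\ge0}-\sum_{r,s<0}\big)q^{rs}x^ry^s/(1-q^{r+s}z)$, and your functional equation $F(x,y,z)-zF(qx,qy,z)=\tfrac{(q)_\infty^2(xy,q/xy;q)_\infty}{(x,q/x,y,q/y;q)_\infty}$ is also right. The genuine gap is the central step. ``Extract the diagonal $r=s=k$'' does not produce the displayed Appell terms: the diagonal of your reduced sum is $\sum_k q^{k^2}(xy)^k/(1-q^{2k}z)$, whereas the theorem has $\sum_k(-1)^kq^{k^2}(xy)^k/(1+q^{2k}z)$ --- an alternating sign and a $+$ in the denominator, i.e.\ effectively the diagonal evaluated at $(-xy,-z)$ rather than at $(xy,z)$. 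That sign structure has to be manufactured by splitting the double sum according to the parity of $r+s$ (which is also where the base change $q\mapsto q^2$ in the prefactors comes from), and nothing in your sketch performs or even flags that splitting. Moreover, the off-diagonal part is not ``supplied by (\ref{equation:kronecker})'': Kronecker's identity evaluates the unweighted double sum, not the Lambert-weighted one, so it only gives you the inhomogeneous term of the functional equation, not a decomposition of $F$ itself. Your fallback --- matching poles, residues, and the three-variable functional equation and invoking uniqueness --- is a legitimate strategy and is close in spirit to how such identities are actually established, but as you concede, the joint three-variable elliptic argument and the exact identification of the $-2\,(q^2;q^2)_\infty^3(\cdots)$ correction term \emph{are} the theorem; leaving them as ``where the real work lies'' means the proposal is an outline with the decisive steps unexecuted.
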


We recall the first well-known result that can be deduced from Theorem \ref{theorem:result}.  Let us define $r_{3\Delta}(n)$ to be the number of representations of $n$ as a sum of three triangular numbers, where a triangular number is a number of the form $k(k-1)/2$.  We write the generating function as
\begin{equation*}
\sum_{n=0}^{\infty}r_{3\Delta}(n)q^n:=\Big ( \sum_{n=0}^{\infty}q^{\binom{n+1}{2}}\Big )^3=\Big ( \frac{(q^2;q^2)_{\infty}^2}{(q;q)_{\infty}}\Big )^3, 
\end{equation*}
where we have used Jacobi's triple product identity (\ref{equation:JTPid}).  Gauss discovered that every positive integer is representable as a sum of three triangular numbers, \cite[Art. $293$]{G}.  

If we specialize Theorem \ref{theorem:result} with $x=y=z$ and then make the substitutions: $q\mapsto q^2$, $x\mapsto q$, we obtain the following identity, which gives as an immediate corollary Gauss's result on triangular numbers. 
\begin{theorem}\cite[Theorem $1.7$]{Mo2017B}\label{theorem:EYPHKA} We have
\begin{equation*}
\sum_{n=0}^{\infty}r_{3\Delta}(n)q^n=1+3\sum_{r\ge 1}^{\infty}q^r+3\sum_{r,s \ge 1}q^{2rs+r+s} 
+\Big ( \sum_{r,s,t > 0}+\sum_{r,s,t < 0}\Big )q^{2rs+2rt+2st+r+s+t}.
\end{equation*}
\end{theorem}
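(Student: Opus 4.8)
The plan is to start from the symmetrized higher-dimensional Kronecker identity, Theorem~\ref{theorem:result}, and perform the specialization $x=y=z$ followed by the substitutions $q\mapsto q^2$ and $x\mapsto q$, then show that the three single-sum ``Appell-type'' terms and the double product term collapse to elementary pieces. First I would set $x=y=z$ in \eqref{equation:thm-result}. On the left-hand side the exponent $rs+rt+st$ and the monomial $x^{r+s+t}$ appear, so after $q\mapsto q^2$, $x\mapsto q$ the left side becomes $\big(\sum_{r,s,t\ge 0}+\sum_{r,s,t<0}\big)q^{2rs+2rt+2st+r+s+t}$, which is almost the right-hand side of Theorem~\ref{theorem:EYPHKA} except that the summation ranges include boundary terms $r,s,t=0$. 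The main bookkeeping step is to peel off those boundary contributions: the all-zero term contributes $1$, the terms with exactly one of $r,s,t$ equal to zero (and the other two positive, by symmetry three such regions) contribute $3\sum_{r,s\ge 1}q^{2rs+r+s}$ after the substitution, the terms with exactly two indices zero contribute $3\sum_{r\ge 1}q^r$, and what remains of the nonnegative octant is $\sum_{r,s,t>0}$; the negative octant $r,s,t<0$ has no boundary and directly gives $\sum_{r,s,t<0}$. This reorganization of the left side already produces the shape claimed in the theorem, so the real content is to verify that the \emph{right} side of \eqref{equation:thm-result}, under the same specialization, equals the generating function $\sum_{n\ge0}r_{3\Delta}(n)q^n=\big((q^2;q^2)_\infty^2/(q;q)_\infty\big)^3$.

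For the right-hand side, after $x=y=z$ the three Appell sums become identical, so we get $3$ times a single term of the form $\dfrac{(x^2,q^2/x^2;q^2)_\infty}{(x,x,q/x,q/x;q)_\infty}\dfrac{(q;q)_\infty^2}{(q^2;q^2)_\infty}\sum_{k\in\mathbb{Z}}\dfrac{(-1)^kq^{k^2}x^{2k}}{1+q^{2k}x}$, minus $2\dfrac{(q^2;q^2)_\infty^3}{(x,x,x,q/x,q/x,q/x;q)_\infty}\dfrac{(x^2,x^2,x^2,q^2/x^2,q^2/x^2,q^2/x^2;q^2)_\infty}{(-x,-x,-x,-q^2/x,-q^2/x,-q^2/x;q^2)_\infty}$. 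Now apply $q\mapsto q^2$, $x\mapsto q$. The base-$q^2$ Pochhammer symbols $(q^2;q^4)_\infty$, $(q/q^2\cdots)$ etc.\ must be simplified using the elementary splittings $(q;q)_\infty=(q;q^2)_\infty(q^2;q^2)_\infty$ and the fact that many factors like $(q^2;q^2)_\infty$ in numerator and denominator telescope; the key identity to invoke here is Jacobi's triple product \eqref{equation:JTPid} to rewrite $\sum_k(-1)^kq^{k^2+\cdots}$-type objects and to recognize $(q^2;q^2)_\infty^2/(q;q)_\infty$ as the generating function for triangular numbers. The Appell sum $\sum_k (-1)^k q^{2k^2} q^{2k}/(1+q^{2k+1})$ is the crux: I expect it to simplify either to a theta quotient directly or, after combining the $k$ and $-k-?$ terms and clearing the denominator $1+q^{2k+1}$, to a ratio of theta functions that matches the needed power of $(q^2;q^2)_\infty^2/(q;q)_\infty$.

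The step I expect to be the main obstacle is exactly this evaluation of the specialized Appell/Lambert-type sum and its interaction with the infinite products: showing that the combination ``$3\cdot(\text{Appell term}) - 2\cdot(\text{product term})$'' collapses to the single clean cube $\big((q^2;q^2)_\infty^2/(q;q)_\infty\big)^3$. This is where genuine $q$-series manipulation is needed rather than bookkeeping --- one must handle the pole-bearing summand $1/(1+q^{2k}x)$ carefully in the limit $x\mapsto q$ (no actual pole occurs since $|q|<1$, but the terms must be regrouped), likely by pairing $k$ with $-k$, using the functional equation of the relevant Appell function, and then applying \eqref{equation:JTPid} twice. Everything else --- the left-side boundary extraction and the routine product splittings --- is mechanical. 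Once both sides are shown to equal $\sum_{n\ge0}r_{3\Delta}(n)q^n$ minus, respectively, plus the elementary boundary sums, Theorem~\ref{theorem:EYPHKA} follows by equating them and moving the elementary sums across.
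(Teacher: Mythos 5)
Your route is exactly the one the paper indicates: Theorem~\ref{theorem:EYPHKA} is quoted from \cite{Mo2017B}, and the paper's entire ``proof'' is the remark that one specializes Theorem~\ref{theorem:result} with $x=y=z$ and then substitutes $q\mapsto q^2$, $x\mapsto q$. Your bookkeeping on the left-hand side is correct and complete: the all-zero term gives $1$, the three faces with exactly one index zero give $3\sum_{r,s\ge 1}q^{2rs+r+s}$, the three edges with two indices zero give $3\sum_{r\ge 1}q^r$, the open positive octant and the negative octant give the remaining double sum. That part is genuinely mechanical and you have it right.

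The gap is that the only substantive content of the theorem --- showing that the specialized right-hand side of \eqref{equation:thm-result} collapses to $\big((q^2;q^2)_\infty^2/(q;q)_\infty\big)^3$ --- is asserted as an expectation (``I expect it to simplify'') rather than carried out. After $x=y=z$, $q\mapsto q^2$, $x\mapsto q$, the three identical Appell terms contribute $3\cdot\frac{(q^2;q^4)_\infty^2}{(q;q^2)_\infty^4}\frac{(q^2;q^2)_\infty^2}{(q^4;q^4)_\infty}\sum_{k}\frac{(-1)^kq^{2k^2+2k}}{1+q^{4k+1}}$ (note your denominator $1+q^{2k+1}$ is a slip: $1+q^{2k}x$ becomes $1+q^{4k+1}$, not $1+q^{2k+1}$), and the whole claim reduces to evaluating this Lambert-type sum. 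That evaluation is not a routine pairing of $k$ with $-k$ plus Jacobi's triple product; in \cite{Mo2017B} it is done by recognizing the sum as $j(z;q)m(x,z;q)$ for appropriate arguments and invoking the special values and the change-of-$z$ formula for Appell functions --- precisely the machinery recorded here as \eqref{equation:mdef-eq}--\eqref{equation:changing-z} and used in Section~\ref{section:newAC} for the analogous computation with $y=ix$, $z=-ix$. Without pinning down which Appell evaluation closes the argument (e.g.\ a value analogous to $m(q,-1;q^2)=\tfrac12$), the proposal stops exactly where the proof begins; everything you have verified is the part that requires no idea.
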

\noindent Gauss's EYPHKA theorem is also a corollary of an identity of Andrews \cite[$(5.17)$]{A}.

In \cite[Section $3$]{Mo2017B}, Mortenson also showed that specializing Theorem \ref{theorem:result} with $x=y=z$, $x\mapsto -1$, gives a new proof of an identity found by Crandall in his work on the Madelung constant.  Crandall \cite{Cr} has also pointed out that his identity may be obtained from Andrews's identity \cite[$(5.16)$]{A}.  The Andrews--Crandall identity reads
\begin{theorem} \cite[$(6.2)$]{Cr} \label{theorem:ac-identity} For positive integers $n$, we have
\begin{equation*}
r_3(n)=6(-1)^{n+1}\sum_{\substack{r,s \ge 1\\rs=n}}(-1)^{r+s} +4(-1)^{n+1}\sum_{\substack{r,s,t \ge 1\\rs+rt+st=n}}(-1)^{r+s+t}.
\end{equation*}
\end{theorem}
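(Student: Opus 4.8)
The plan is to deduce Theorem~\ref{theorem:ac-identity} from Theorem~\ref{theorem:result} by the specialization $x=y=z\mapsto -1$, mirroring the approach of \cite{Mo2017B}. First I would set $x=y=z$ in \eqref{equation:thm-result}. On the left-hand side the exponent $rs+rt+st$ becomes symmetric in all three variables, and the left side collapses to $\big(\sum_{r,s,t\ge 0}+\sum_{r,s,t<0}\big)q^{rs+rt+st}x^{r+s+t}$. On the right side the three $\theta$-quotient/Appell terms become identical, giving $3\cdot\frac{(x^2,q^2/x^2;q^2)_\infty}{(x,x,q/x,q/x;q)_\infty}\frac{(q;q)_\infty^2}{(q^2;q^2)_\infty}\sum_k \frac{(-1)^kq^{k^2}x^{2k}}{1+q^{2k}x}$, while the middle term becomes $-2\frac{(q^2;q^2)_\infty^3}{(x,x,x,q/x,q/x,q/x;q)_\infty}\frac{(x^2,x^2,x^2,q^2/x^2,q^2/x^2,q^2/x^2;q^2)_\infty}{(-x,-x,-x,-q^2/x,-q^2/x,-q^2/x;q^2)_\infty}$. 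So after this first reduction I would have a three-variable-to-one-variable identity.

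Next I would take the limit $x\to -1$. The key analytic point is that several factors blow up: $(x;q)_\infty=(1-x)(xq;q)_\infty$ has a zero at $x=-1$ coming from $1-x\to 2$, but more importantly the factor $1/(1+x)$ hidden in $1/(x;q)_\infty$ via $1/(1-q^0 x)$ — wait, rather it is the factors $(-x;q^2)_\infty$ in the denominator of the middle term and the summand $1/(1+q^{2k}x)$ in the Appell sums (the $k=0$ term) that produce $1/(1+x)$ singularities. I would need to check that these singular contributions cancel between the ``$3\times$'' Appell term and the ``$-2\times$'' middle term, so that the limit is finite; this balancing of poles is exactly the mechanism used in \cite{Mo2017B}, and extracting the finite part will require a careful Laurent expansion about $x=-1$ (L'Hôpital or writing $x=-1+\varepsilon$ and keeping the $\varepsilon^0$ term). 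On the left-hand side the limit is immediate: $x^{r+s+t}\to(-1)^{r+s+t}$, giving $\big(\sum_{r,s,t\ge 0}+\sum_{r,s,t<0}\big)(-1)^{r+s+t}q^{rs+rt+st}$.

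Then I would unpack the combinatorics of that left-hand side. Splitting off the terms where one or more of $r,s,t$ equals $0$: the all-zero term gives $1$; terms with exactly one nonzero index vanish because $rs+rt+st=0$ there but contribute $(-1)^r q^0$... actually with one index zero and in the ``$\ge 0$'' octant the exponent is $0$ only if a second index is also zero, so I would organize by how many indices are strictly positive (resp. strictly negative). The genuinely new content is the term with all three indices nonzero, $2\sum_{r,s,t\ge 1}(-1)^{r+s+t}q^{rs+rt+st}$, and the terms with exactly two nonzero indices, $3\cdot 2\sum_{r,s\ge 1}(-1)^{r+s}q^{rs}$ (the factor $3$ from choosing which index is zero, the factor $2$ from the two octants). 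Matching $\sum_n r_3(n)(-q)^n$-type generating functions: the right-hand side, after the limit, should simplify — using Jacobi's triple product \eqref{equation:JTPid} and the theta identity for $\sum_m(-1)^m q^{m^2}$ — to $\big((q;q)_\infty/(-q;q)_\infty\big)^3=\sum_{n\ge 0} r_3(n)(-q)^n$. Comparing coefficients of $q^n$ (and dividing through by the overall sign $(-1)^n$ that appears, which accounts for the $(-1)^{n+1}$ in the statement) yields the claimed formula with constants $6$ and $4$.

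The main obstacle will be the second step: correctly computing the $x\to -1$ limit, i.e.\ verifying that the double pole / simple poles in $1/(1+x)$ coming from the Appell summands and from $(-x;q^2)_\infty^{-1}$ cancel, and then isolating the finite part cleanly enough to recognize it as $\big((q;q)_\infty/(-q;q)_\infty\big)^3$. A secondary subtlety is bookkeeping the lower-order ($\le 2$ nonzero indices) pieces of the left-hand side and confirming they match the $n=0$ and ``divisor-sum'' contributions on the right after the limit, so that only the stated two sums survive; getting the signs $(-1)^{n+1}$ and the coefficients $6$ and $4$ exactly right is where care is needed. I expect the rest — the reduction under $x=y=z$ and the triple-product simplifications — to be routine.
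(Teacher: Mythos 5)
The paper does not actually prove Theorem~\ref{theorem:ac-identity}; it is quoted from Crandall, with the proof via the specialization $x=y=z$, $x\mapsto-1$ of Theorem~\ref{theorem:result} attributed to \cite[Section~3]{Mo2017B}. Your proposal follows exactly that cited route, and the strategy is sound, but two concrete bookkeeping errors in your sketch would produce the wrong constants if carried through literally. First, on the left-hand side the octant $\{r,s,t<0\}$ forces \emph{all three} indices to be negative, so it contributes only to the all-indices-nonzero piece; the terms with exactly two nonzero indices come solely from the non-negative octant and carry coefficient $3$, not $3\cdot 2=6$. Likewise the one-nonzero-index terms do not vanish: they are $3\sum_{r\ge1}x^r=3x/(1-x)\to-\tfrac32$, so the constant term of the limit is $1-\tfrac32=-\tfrac12$. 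Second, the right-hand side limit is $-\tfrac12\bigl((q;q)_{\infty}/(-q;q)_{\infty}\bigr)^3=-\tfrac12\sum_{n}r_3(n)(-q)^n$, not $\bigl((q;q)_{\infty}/(-q;q)_{\infty}\bigr)^3$ (the three Appell terms contribute $\tfrac32$ of the cube and the middle term contributes $-2$ of it). It is this overall $-\tfrac12$, together with the $(-1)^n$ from $(-q)^n$, that converts the left-hand coefficients $3$ and $2$ into the stated $6(-1)^{n+1}$ and $4(-1)^{n+1}$; with your normalization the constant-term check fails and the triple-sum coefficient comes out as $2$ rather than $4$.

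A smaller point: no cancellation of poles \emph{between} the ``$3\times$'' Appell term and the ``$-2\times$'' middle term is needed. Each of the four summands is individually regular at $x=-1$: in the Appell terms the simple zero of the prefactor $(x^2;q^2)_{\infty}$ at $x=-1$ cancels the simple pole of the $k=0$ summand $1/(1+x)$ (and kills all $k\ne0$ summands), while in the middle term the zeros of $(xy,xz,yz;q^2)_{\infty}$ cancel the poles from $(-x,-y,-z;q^2)_{\infty}$ in the denominator. This is the same mechanism the present paper uses term by term in Section~\ref{section:newAC} for its different specialization, and it makes the limit computation routine once each factor of $(1\pm x)$ is isolated.
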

Mortenson \cite{Mo2017B} then used the Andrews--Crandall identity to give a new proof of
\begin{theorem}[Gauss] \label{theorem:gauss-withsquare} For positive integers $n$, we have
\begin{align*}
r_3(n)&=12H(4n), & \textup{for }n\equiv 1,\ 2,\ 5, \textup{ or } 6 \pmod 8,\\
r_3(n)&=24H(n), &  \textup{for }n\equiv 3 \pmod 8,\\
r_3(n)&=0, &  \textup{for }n\equiv 7 \pmod 8,\\
r_3(n)&=r_3(n/4),  &  \textup{for }n\equiv 0 \pmod 4,
\end{align*}
where $H(d)$ is the Hurwitz class number of discriminant $d$.
\end{theorem}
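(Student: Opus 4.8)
The plan is to extract the four formulas from the Andrews--Crandall identity (Theorem~\ref{theorem:ac-identity}) by evaluating its two arithmetic sums one residue class of $n\pmod 8$ at a time. Abbreviate
\[
\delta(n):=\sum_{\substack{r,s\ge 1\\ rs=n}}(-1)^{r+s},\qquad
\tau(n):=\sum_{\substack{r,s,t\ge 1\\ rs+rt+st=n}}(-1)^{r+s+t},
\]
so that Theorem~\ref{theorem:ac-identity} reads $r_3(n)=(-1)^{n+1}\bigl(6\,\delta(n)+4\,\tau(n)\bigr)$. First I would dispose of $\delta(n)$, which is elementary: writing $n=2^{a}m$ with $m$ odd and grouping the factorisations $rs=n$ according to the power of $2$ dividing $r$, one finds $\delta(n)=d(n)$ for $a=0$ and $\delta(n)=(a-3)\,d(m)$ for $a\ge 1$, where $d(\cdot)$ is the divisor-counting function. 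In particular $\delta$ changes by an explicit multiple of $d(m)$ under $n\mapsto 4n$, which will feed the fourth assertion.

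The real work is the evaluation of $\tau(n)$. The decisive manipulation is to single out one variable, say $r$, and complete the product: $rs+rt+st=n$ is equivalent to $(r+s)(r+t)=n+r^{2}$, so that, writing $b=(n+r^{2})/a$ and noting $s+t=a+b-2r$,
\[
\tau(n)=\sum_{r\ge 1}(-1)^{r}\sum_{\substack{a\mid n+r^{2}\\ r<a<r+n/r}}(-1)^{\,a+b}.
\]
This turns $\tau(n)$ into a parity-weighted count of the lattice points on the hyperbolas $ab=n+r^{2}$. I would then reorganise this double sum, isolate the ``diagonal'' contributions $a=b$ and the contributions with $a$ near $r$, rewrite the weight $(-1)^{a+b}$ via congruences on $a$ and on $n+r^{2}$ (this is where the dependence on $n\bmod 8$ and on the parities of the $r$'s enters), and compare the result with a standard class number relation of Kronecker--Hurwitz type, for instance $\sum_{t\in\mathbb Z}H(4n-t^{2})=\sum_{d\mid n}\max(d,n/d)$ with $H(0)=-\tfrac1{12}$ together with its odd-argument analogue. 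The outcome should be that $6\,\delta(n)+4\,\tau(n)$ is, up to sign, a fixed rational multiple of a single Hurwitz class number, the multiple depending only on $n\bmod 8$. This is the step I expect to be the main obstacle: the parity bookkeeping is delicate, the $\lambda_1$-type correction term of the Kronecker--Hurwitz relation must be carried through exactly, and the target constants $6$, $4$, $12$, $24$ leave no slack.

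It remains to read this off in the four cases. For $n\equiv 1,2,5,6\pmod 8$ the computation should give $r_3(n)=12H(4n)$ directly. For $n\equiv 3\pmod 8$ it should give $6H(4n)$, which equals $24H(n)$ by the classical identity $H(4N)=4H(N)$ for $N\equiv 3\pmod 8$ (a standard fact about the class number of the order of discriminant $-4N$). For $n\equiv 7\pmod 8$ the class number multiple is zero, so $r_3(n)=0$; this is the Andrews--Crandall route to Legendre's three-square obstruction. Finally, for $4\mid n$, combining the transformation law for $\delta$ with the one for $\tau$ obtained from the hyperbola description above gives $6\,\delta(4n)+4\,\tau(4n)=-\bigl(6\,\delta(n)+4\,\tau(n)\bigr)$, whence $r_3(4n)=r_3(n)$ once the sign $(-1)^{n+1}$ is restored. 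Assembling the four cases proves Theorem~\ref{theorem:gauss-withsquare}.
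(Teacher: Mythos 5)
First, a point of orientation: the paper does not actually prove Theorem \ref{theorem:gauss-withsquare} --- it is quoted as background, with the proof via the Andrews--Crandall identity attributed to \cite{Mo2017B}. The closest thing to ``the paper's own proof'' is the completely parallel argument it gives for its new Theorem \ref{theorem:newAC-identity} in Sections \ref{section:FourierCoefficient}--\ref{section:newAC-FC}. Measured against that template, your opening moves are right: you invoke Theorem \ref{theorem:ac-identity}, you evaluate the two-variable sum $\delta(n)$ correctly by residue classes (your formulas $\delta(n)=d(n)$ for odd $n$ and $\delta(n)=(a-3)d(m)$ for $n=2^am$ check out), and your completion $rs+rt+st=n\iff(r+s)(r+t)=n+r^2$ is exactly the paper's trick (\ref{equation:trick}) in disguise.

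The genuine gap is that the decisive step --- converting the weighted count of triples with $(r+s)(r+t)=n+r^2$ into a class number --- is left as a plan, and the lemma you propose to close it with is the wrong one. The Kronecker--Hurwitz relation $\sum_{t}H(4n-t^2)=\sum_{d\mid n}\max(d,n/d)$ sums class numbers over the \emph{varying} discriminants $t^2-4n$; but your own reformulation shows that each solution triple produces the binary form $(r+s)\,x^2+2r\,xy+(r+t)\,y^2$ of the \emph{single} discriminant $4r^2-4(r+s)(r+t)=-4n$. What is actually needed (and what the paper does for its analogue in Theorem \ref{theorem:theorem1-nPlusr2}) is a weighted bijection between solution triples, sorted by the relative order of $r$, $2s$, $2t$ (resp.\ $r$, $2s-1$, $2t-1$), and \emph{reduced} forms of discriminant $-4n$: one must verify the reduction inequalities $|b|\le a\le c$ case by case, count the $b=0$ forms separately (these produce the $\sigma_0$-type terms that cancel against $6\delta(n)$), weight the imprimitive multiples of $x^2+y^2$ and $x^2+xy+y^2$ by $\tfrac12$ and $\tfrac13$, and, in the $n\equiv3\pmod 8$ case, separate even from odd $r$ and invoke Lemma \ref{lemma:mult-4} to pass from $H(4n)$ to $H(n)$. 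None of this bookkeeping is in your sketch, and you flag it yourself as ``the main obstacle.'' Two smaller problems: for $n\equiv7\pmod8$ the terms do not all carry the same sign, so the sum does not reduce to a ``class number multiple equal to zero'' --- one must show a signed count of triples equals $\tfrac12\sigma_0(n)$ exactly, as in Case 4 of the paper's Theorem \ref{theorem:theorem1-nPlusr2}; and your claimed relation $6\delta(4n)+4\tau(4n)=-\bigl(6\delta(n)+4\tau(n)\bigr)$ has the wrong sign when $n$ is even (the factor $(-1)^{n+1}$ forces the sign $(-1)^n$ on the right, and indeed your own formula gives $\delta(4n)=0$ while $\delta(n)=-2d(m)$ when $n=2m$ with $m$ odd, so the two sums cannot transform by a uniform $-1$).
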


\noindent A consequence of Gauss's theorem is the celebrated local to global principle:

\smallskip
\begin{center}
{ \em Legendre/Gauss $(1800)$: $r_3(n)>0$ if and only if $n\ne 4^a(8b+7)$.}  
\end{center}

\subsection{The main results}
We will demonstrate a specialization of Theorem \ref{theorem:result} that gives a new Andrews--Crandall-type identity.
\begin{theorem}\label{theorem:result-DKM} We have
\begin{align}
1& -
4 \sum_{r,s\ge 1}q^{2rs}(-1)^{r+s}
 - 2\sum_{s,t\ge 1}q^{4st}(-1)^{s+t}-2 \sum_{s,t\ge 1}q^{(2s-1)(2t-1)}(-1)^{s+t}  \label{relation_DKM}\\
 & \qquad 
 -4\sum_{r,s,t\ge 1} q^{2rs+2rt+4st}(-1)^{r+s+t}
  -4 \sum_{r,s,t\ge 1}q^{r(2s-1)+r(2t-1)+(2s-1)(2t-1)}(-1)^{r+s+t}\notag \\
 &=\left ( \frac{(q;q)_{\infty}}{(-q;q)_{\infty}}\right ) 
\left( \frac{(q^2;q^2)_{\infty}}{(-q^2;q^2)_{\infty}}\right )^2=:\sum_{n=0}^{\infty}a(n)q^{n}.\notag 
\end{align}
\end{theorem}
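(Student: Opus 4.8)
The plan is to obtain (\ref{relation_DKM}) as a limiting specialization of the higher-dimensional Kronecker identity of Theorem \ref{theorem:result}, in the spirit of Mortenson's derivations of Theorems \ref{theorem:EYPHKA} and \ref{theorem:ac-identity}, but with a specialization adapted to the ternary form $x^2+2y^2+2z^2$. Writing $\varphi(-q)=\sum_{m\in\Z}(-1)^mq^{m^2}=(q;q)_\infty/(-q;q)_\infty$ and $\varphi(-q^2)=\sum_{m\in\Z}(-1)^mq^{2m^2}=(q^2;q^2)_\infty/(-q^2;q^2)_\infty$, both consequences of Jacobi's triple product (\ref{equation:JTPid}), one sees that the right-hand side of (\ref{relation_DKM}) equals $\varphi(-q)\varphi(-q^2)^2=\sum_{x,y,z\in\Z}(-1)^{x+y+z}q^{x^2+2y^2+2z^2}$, so that $a(n)=\sum_{x^2+2y^2+2z^2=n}(-1)^{x+y+z}$. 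Because this form singles out the first variable, the specialization of Theorem \ref{theorem:result} cannot be fully symmetric: I would take two of the parameters $x,y,z$ equal (say $y=z$), possibly rescale $q\mapsto q^2$, and then send the remaining free parameters to roots of unity along paths chosen so that the various $0/0$ factors appearing in (\ref{equation:thm-result}) --- notably the quotient $(xy,xz,yz,q^2/xy,q^2/xz,q^2/yz;q^2)_\infty/(-x,-y,-z,-q^2/x,-q^2/y,-q^2/z;q^2)_\infty$ in the third summand --- have finite limits. The precise parameter values are then forced by the requirement that the double-cone sum on the left of (\ref{equation:thm-result}) reduce, under this specialization, to the expression $1-4\sum q^{2rs}(-1)^{r+s}-\cdots$ on the left of (\ref{relation_DKM}).

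Carrying out the left-hand (double-cone) side is essentially bookkeeping. One splits $\big(\sum_{r,s,t\ge0}+\sum_{r,s,t<0}\big)q^{rs+rt+st}x^ry^sz^t$ according to how many of $r,s,t$ vanish: the unique all-zero term produces the constant $1$; under the limit the terms with exactly two vanishing indices disappear, which explains why no single sums occur in (\ref{relation_DKM}); the terms with exactly one vanishing index produce the three double sums; and the two opposite cones $r,s,t\ge1$ and $r,s,t\le-1$ produce the two triple sums, the second one carrying the shifted exponent $r(2s-1)+r(2t-1)+(2s-1)(2t-1)$. The key bookkeeping observation is that, on relabelling $(u,v)=(2s,2t)$ in the first triple sum and $(u,v)=(2s-1,2t-1)$ in the second, the two triple sums together run over $q^{ru+rv+uv}(-1)^{\cdots}$ with $r,u,v\ge1$ and $u\equiv v\pmod 2$ --- exactly the Hermite--Liouville--Mordell shape (\ref{equation:HLM}) subject to the parity constraint that reflects $x^2+2y^2+2z^2=x^2+(y-z)^2+(y+z)^2$; the same even/odd split accounts for the pair $q^{4st}$, $q^{(2s-1)(2t-1)}$ among the double sums, and the multiplicities $4,2,2$ record how the three choices of vanishing index organize under the $y=z$ symmetry.

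The substantive step --- and the place where I expect the real difficulty to lie --- is to show that, under the same specialization, the theta/Appell side of (\ref{equation:thm-result}) collapses to $\varphi(-q)\varphi(-q^2)^2$. After setting $y=z$ the second and fourth summands of (\ref{equation:thm-result}) coincide, so only three distinct evaluations are needed, but at the relevant parameter values each of them is indeterminate ($0/0$ or $0\cdot\infty$). Resolving these requires, first, L'Hôpital's rule together with limiting infinite-product evaluations from (\ref{equation:JTPid}), and, second --- the delicate point --- evaluating the Appell-type series $\sum_{k\in\Z}(-1)^kq^{k^2}w^k/(1+q^{2k}x)$ at the specialized parameters as an explicit ratio of theta functions. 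For this I would invoke the two-variable Kronecker identity (\ref{equation:kronecker}) (equivalently Ramanujan's ${}_1\psi_1$-summation), or a partial-fraction / Appell--Lerch ``changing-the-variable'' argument; the subtlety is that at the relevant value of $x$ the $k=0$ term of the series sits on the boundary of convergence, so the reduction must be performed before taking the limit, or through a carefully controlled limiting process. Once the four summands are simplified and added, the total should equal $\varphi(-q)\varphi(-q^2)^2$ coefficient by coefficient; as an independent check, the coefficient of $q^n$ on each side of (\ref{relation_DKM}) can be compared with the value of $a(n)$ for small $n$.
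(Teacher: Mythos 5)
Your high-level strategy is the paper's: specialize Theorem \ref{theorem:result} at roots of unity, regularize the $0/0$ factors before passing to the limit, and evaluate the surviving Appell-type sums by a changing-the-variable argument (the paper uses \eqref{equation:changing-z} together with $m(q,-1;q^2)=\tfrac12$). But your concrete choice of specialization is wrong, and it is the one choice on which everything else hinges. The paper sets $y=ix$, $z=-ix$ and lets $x\to-1$, i.e.\ $(x,y,z)\to(-1,-i,i)$; it does \emph{not} set $y=z$, and there is no $q\mapsto q^2$ rescaling. With $y=z$ the weight attached to a cone term $q^{rs+rt+st}x^ry^{s+t}$ depends on $s,t$ only through $s+t$, so there is no mechanism to kill the terms with $s\not\equiv t\pmod 2$; and rescaling $q\mapsto q^2$ turns $rs+rt+st$ into $2rs+2rt+2st$, which cannot match the asymmetric exponent $2rs+2rt+4st$ in \eqref{relation_DKM}, where $s$ and $t$ are doubled relative to $r$. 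The mechanism that actually produces the parity split you correctly describe ($u\equiv v\pmod 2$) is the factor $i^{s+t}$ coming from $y=ix$, $z=-ix$: under the $s\leftrightarrow t$ symmetrization the terms with $s+t$ odd carry $(-1)^s+(-1)^t=0$ and cancel, and reindexing the surviving terms with $s,t$ both even or both odd yields exactly the two triple sums and the pair $q^{4st}$, $q^{(2s-1)(2t-1)}$. A consequence you miss is that the second and fourth summands of \eqref{equation:thm-result} do not coincide; they are complex conjugates, and their sum is $-\tfrac12\varphi(-q)\varphi(-q^2)^2$, while the first and third summands cancel exactly against each other.

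Two further points. First, your claim that the terms with exactly two vanishing indices ``disappear'' in the limit is false: the three single sums are geometric series converging to $\tfrac{x}{1-x}+\tfrac{ix}{1-ix}-\tfrac{ix}{1+ix}\to-\tfrac32$ at $x=-1$, which combines with the all-zero term $+1$ to give $-\tfrac12$; multiplying the whole identity by $-2$ is what produces the constant $1$ in \eqref{relation_DKM}. Second, your worry about the $k=0$ term of the Appell sum sitting on the boundary is located in the wrong place: the genuine singularity is in the first and third summands (where the denominator $1+q^{2k}x$ vanishes at $k=0$, $x=-1$), and it is removed by extracting the factors $(1\pm x)$ from the infinite products before taking the limit; the Appell sums that survive, with denominator $1\pm iq^{2k}$, are perfectly regular and are evaluated via $m(-q,iq;q^2)$ and \eqref{equation:changing-z}. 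As written, your proposal would not reproduce the left-hand side of \eqref{relation_DKM}, and the decisive computation on the theta side is left unexecuted.
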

It is straight forward to write Theorem \ref{theorem:result-DKM} in a form closer to the Andrews--Crandall identity:
\begin{corollary} \label{cor:result-DKM}
For positive integers $n$, we have
\begin{align}
\label{DKM_even}
    a(n) &= -4\sum_{\substack{r,s\ge1\\2rs=n}}(-1)^{r+s} 
 - 2 \sum_{\substack{s,t\ge1\\4st=n}}(-1)^{s+t}- 4\sum_{\substack{r,s,t\ge1\\2rs+2rt+4st=n}}(-1)^{r+s+t}, \quad &n = 2m,
 \\
 \label{DKM_odd}
 a(n) &= -2\sum_{\substack{s,t\ge 1\\(2s-1)(2t-1)=n}}(-1)^{s + t}-4\sum_{\substack{r,s,t\ge 1\\ 2r(s+t-1)+(2s-1)(2t-1)=n}}(-1)^{r + s + t},\quad &n = 2m + 1.
\end{align}
\end{corollary}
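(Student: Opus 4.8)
The plan is to derive Corollary \ref{cor:result-DKM} directly from Theorem \ref{theorem:result-DKM} by comparing coefficients of $q^n$ on both sides, after splitting the six convolution-type sums in \eqref{relation_DKM} according to the parity of the exponent they contribute. First I would observe that the constant $1$ on the left contributes only to $a(0)$, so for $n\ge 1$ it plays no role. Then I would inspect each of the five remaining sums and record which residue class modulo $2$ its exponents lie in: the term $2rs$ is always even, the term $4st$ is always even, the term $(2s-1)(2t-1)$ is always odd, the term $2rs+2rt+4st$ is always even, and the mixed term $r(2s-1)+r(2t-1)+(2s-1)(2t-1) = 2r(s+t-1)+(2s-1)(2t-1)$ has the parity of $(2s-1)(2t-1)$, hence is always odd. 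So the six pieces on the left split cleanly: three even-exponent sums and two odd-exponent sums, with no overlap between the two groups.

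Next I would extract the coefficient of $q^n$. For $n=2m$ even, only the three even sums can contribute, and matching the coefficient of $q^{2m}$ in each gives exactly the three terms on the right-hand side of \eqref{DKM_even}: $-4\sum_{2rs=n}(-1)^{r+s}$ from the first sum, $-2\sum_{4st=n}(-1)^{s+t}$ from the second, and $-4\sum_{2rs+2rt+4st=n}(-1)^{r+s+t}$ from the fourth. For $n=2m+1$ odd, only the two odd sums contribute, yielding $-2\sum_{(2s-1)(2t-1)=n}(-1)^{s+t}$ and $-4\sum_{2r(s+t-1)+(2s-1)(2t-1)=n}(-1)^{r+s+t}$, which is precisely \eqref{DKM_odd}; here I would just rewrite $r(2s-1)+r(2t-1)=2r(s+t-1)$ to put the condition in the stated form. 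One should double-check the sign conventions: in \eqref{relation_DKM} the left side equals $\sum a(n)q^n$, so $a(n)$ is literally the coefficient of $q^n$ on the left, and since all the listed sums appear with minus signs in front, the minus signs carry over verbatim into the corollary.

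The only mild subtlety — and the place I would be most careful — is confirming that the parity splitting is genuinely exhaustive and disjoint for every $n\ge 1$, i.e. that no sum straddles both parities and that every sum's exponent is bounded below by a positive quantity so that the coefficient extraction is a finite sum (which is clear since $r,s,t\ge 1$ forces each exponent to grow, so only finitely many triples contribute to a fixed $q^n$). Given that, the proof is a one-line-per-case bookkeeping argument: restrict \eqref{relation_DKM} to even powers to get \eqref{DKM_even} and to odd powers to get \eqref{DKM_odd}. No new identities or estimates are needed; the content is entirely in Theorem \ref{theorem:result-DKM}, and the corollary is a cosmetic repackaging separating the even and odd cases.
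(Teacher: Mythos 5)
Your proposal is correct and is exactly the argument the paper has in mind: the paper offers no explicit proof, calling the corollary a straightforward rewriting of Theorem \ref{theorem:result-DKM}, and the intended reasoning is precisely your parity split (the sums with exponents $2rs$, $4st$, $2rs+2rt+4st$ are supported on even powers, while those with exponents $(2s-1)(2t-1)$ and $2r(s+t-1)+(2s-1)(2t-1)$ are supported on odd powers) followed by coefficient extraction. Your checks of disjointness, finiteness of each coefficient, and the carried-over minus signs are all accurate, so nothing is missing.
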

As a result of Theorem \ref{theorem:result-DKM}, we are also able to prove

\begin{theorem} \label{theorem:newAC-identity} For positive integers $n$, we have
\begin{align*}
a(n)&=-4H(4n), & \textup{for \ }n\equiv 1,\ 2, \ 5, \ \textup{6} \pmod 8,\\
a(n)&=6H(4n)=24H(n), & \textup{for \ }n\equiv 3 \pmod 8,\\
a(n)&=0, & \textup{for \ }n\equiv 7 \pmod 8,\\
a(n)&=r_3(n)=r_3(n/4), & \textup{for \ }n\equiv 0 \pmod 4.
\end{align*}
\end{theorem}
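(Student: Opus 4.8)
The plan is to set aside the divisor-sum shape of Corollary~\ref{cor:result-DKM} and instead reduce $a(n)$ to an ordinary sum-of-three-squares count, after which Gauss's Theorem~\ref{theorem:gauss-withsquare} finishes the job. First, by Jacobi's triple product identity~\eqref{equation:JTPid},
\[
\frac{(q;q)_{\infty}}{(-q;q)_{\infty}}=\sum_{m\in\Z}(-1)^mq^{m^2},\qquad
\frac{(q^2;q^2)_{\infty}}{(-q^2;q^2)_{\infty}}=\sum_{m\in\Z}(-1)^mq^{2m^2},
\]
so multiplying out the right-hand side of Theorem~\ref{theorem:result-DKM} gives $\sum_{n\ge0}a(n)q^{n}=\sum_{x,y,z\in\Z}(-1)^{x+y+z}q^{x^2+2y^2+2z^2}$, i.e.\ $a(n)=\sum_{x^2+2y^2+2z^2=n}(-1)^{x+y+z}$. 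The substitution $u=y+z$, $v=y-z$ --- a bijection between $\Z^2$ and $\{(u,v):u\equiv v\pmod 2\}$ under which $2y^2+2z^2=u^2+v^2$ and $y+z=u$ --- rewrites this as $a(n)=\sum_{x^2+u^2+v^2=n,\ u\equiv v\,(2)}(-1)^{x+u}$.

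Next I would drop the parity restriction by an averaging trick. Since the indicator of $u\equiv v\pmod2$ is $\tfrac12\bigl(1+(-1)^{u+v}\bigr)$ and $(-1)^{x+u}(-1)^{u+v}=(-1)^{x+v}$, while the set $\{x^2+u^2+v^2=n\}$ is symmetric in $u$ and $v$, the two resulting sums coincide and we get $a(n)=\sum_{x^2+y^2+z^2=n}(-1)^{x+y}$. Relabelling the three coordinates shows this also equals $\sum(-1)^{y+z}$ and $\sum(-1)^{x+z}$, so
\[
3a(n)=\sum_{x^2+y^2+z^2=n}\Bigl((-1)^{x+y}+(-1)^{y+z}+(-1)^{x+z}\Bigr).
\]
Checking the parity patterns, the bracket equals $3$ when $x,y,z$ all have the same parity and $-1$ otherwise; hence, writing $E(n):=\#\{(x,y,z):x^2+y^2+z^2=n,\ x\equiv y\equiv z\pmod2\}$, one gets $a(n)=\tfrac13\bigl(4E(n)-r_3(n)\bigr)$.

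It remains to evaluate $E(n)$ by the residue of $n$ modulo $8$. Representations with all coordinates even occur exactly when $4\mid n$, and then there are $r_3(n/4)$ of them; representations with all coordinates odd occur exactly when $n\equiv3\pmod8$. The crucial elementary observation, obtained by reducing $x^2+y^2+z^2$ modulo $8$, is that these are forced: if $n\equiv3\pmod8$ then \emph{every} representation is all-odd, if $4\mid n$ then every representation is all-even (so $E(n)=r_3(n)$ in both cases), if $n\equiv7\pmod8$ there are no representations at all, and if $n\equiv1,2,5,6\pmod8$ then every representation is of mixed parity (so $E(n)=0$). Feeding this back in yields $a(n)=-r_3(n)/3$ for $n\equiv1,2,5,6,7\pmod8$ and $a(n)=r_3(n)$ for $n\equiv3\pmod8$ or $4\mid n$, and when $4\mid n$ the all-even structure also gives $r_3(n)=r_3(n/4)$. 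Now Theorem~\ref{theorem:gauss-withsquare} closes each case: $r_3(n)=12H(4n)$ gives $a(n)=-4H(4n)$ for $n\equiv1,2,5,6\pmod8$; $r_3(n)=0$ gives $a(n)=0$ for $n\equiv7\pmod8$; $r_3(n)=24H(n)$ gives $a(n)=24H(n)$ for $n\equiv3\pmod8$, which equals $6H(4n)$ by the standard Hurwitz class-number relation $H(4n)=4H(n)$ for $n\equiv3\pmod8$; and $r_3(n)=r_3(n/4)$ gives $a(n)=r_3(n)=r_3(n/4)$ for $4\mid n$.

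The genuinely delicate step --- and the only one needing care --- is the parity bookkeeping: getting the change of variables and the averaging identity $a(n)=\sum_{x^2+y^2+z^2=n}(-1)^{x+y}$ exactly right, and then the modulo-$8$ analysis that pins down $E(n)$ and in particular shows $E(n)=r_3(n)$ precisely when $n\equiv3\pmod8$ or $4\mid n$. Everything else is a direct appeal to Jacobi's triple product, to Gauss's three-squares Theorem~\ref{theorem:gauss-withsquare}, and to the elementary identity $H(4n)=4H(n)$ valid for $n\equiv3\pmod8$. (One could instead start from the divisor sums of Corollary~\ref{cor:result-DKM} and evaluate the associated signed counts of solutions to $xy+xz+yz=n$-type equations, but the theta-function route above is shorter.)
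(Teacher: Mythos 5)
Your proof is correct, but it takes a genuinely different route from the paper's. Every step checks out: Jacobi's triple product gives $a(n)=\sum_{x^2+2y^2+2z^2=n}(-1)^{x+y+z}$, the change of variables $u=y+z$, $v=y-z$ together with the averaging over the parity condition yields $a(n)=\sum_{x^2+y^2+z^2=n}(-1)^{x+y}$, the symmetrization gives $3a(n)=4E(n)-r_3(n)$, and the reduction of squares modulo $8$ correctly forces $E(n)=0$ for $n\equiv 1,2,5,6\pmod 8$, $E(n)=r_3(n)$ for $n\equiv 3\pmod 8$ and for $4\mid n$ (with $r_3(n)=r_3(n/4)$ in the latter case), and $r_3(n)=0$ for $n\equiv 7\pmod 8$; Theorem \ref{theorem:gauss-withsquare} and the relation $H(4n)=4H(n)$ of Lemma \ref{lemma:mult-4} then close each case. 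The paper does something quite different: starting from Corollary \ref{cor:result-DKM}, Propositions \ref{proposition:prop-4mPlus2}--\ref{proposition:prop-4m} rewrite $a(n)$ as signed counts of solutions of Hermite--Liouville--Mordell-type equations $(2s+r)(2t+r)=n+r^2$ and $(2s-1+r)(2t-1+r)=n+r^2$, and Theorem \ref{theorem:theorem1-nPlusr2} evaluates those counts by explicit bijections with reduced binary quadratic forms of discriminant $-4n$ (and $-n$). The trade-off is this: your argument is far shorter and more elementary, but it uses Gauss's Theorem \ref{theorem:gauss-withsquare} as a black box, so in effect it shows that $a(n)$ is a fixed multiple of $r_3(n)$ on each residue class rather than re-deriving the class-number formulas; the paper's longer route is independent of Gauss's three-squares theorem and is the point of the exercise --- it demonstrates that the new Andrews--Crandall-type identity, like the original one in \cite{Mo2017B}, produces the Hurwitz class-number evaluations directly from the divisor-sum structure. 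Your parity analysis also subsumes, more cleanly, the bijection-plus-positivity argument of Proposition \ref{proposition:prop-4m} for the case $4\mid n$.
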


\begin{remark}
It is straightforward to show that $|a(n)|$ counts the number of representations of $n$ as $n=x^2+2y^2+2z^2$.
\end{remark}
In Sections \ref{section:prelim-theta} and \ref{section:prelim-Hurwitz} we review theta functions, Appell functions, and Hurwitz class numbers.  In Section \ref{section:newAC} we prove Theorem \ref{theorem:result-DKM}.  In Section \ref{section:FourierCoefficient} we prove intermediate results for the Fourier coefficients $a(n)$, and in Section \ref{section:newAC-FC} we prove Theorem \ref{theorem:newAC-identity}.  Concluding remarks are found in Section \ref{section:conclusion}.

\section*{Acknowledgements}

This work was supported by the Theoretical Physics and Mathematics Advancement Foundation BASIS, agreement No. 20-7-1-25-1.

\section{Preliminaries: theta functions and Appell functions}\label{section:prelim-theta}
We review the basic facts and terminology for theta functions and Appell functions.  We recall the theta function:
\begin{equation*}
j(x;q):=(x)_{\infty}(q/x)_{\infty}(q)_{\infty}.
\end{equation*}
We let $a$ and $m$ are integers with $m$ positive.  We also define
\begin{gather*}
J_{a,m}:=j(q^a;q^m), \ \ J_m:=J_{m,3m}=\prod_{i\ge 1}(1-q^{mi}), \ {\text{and }}\overline{J}_{a,m}:=j(-q^a;q^m).
\end{gather*}

We will frequently use the following identities without mention.  They easily follow from the definitions.  We have
{\allowdisplaybreaks \begin{subequations}
\begin{gather}
\overline{J}_{0,1}=2\overline{J}_{1,4}=\frac{2J_2^2}{J_1},  \ 
\overline{J}_{1,2}=\frac{J_2^5}{J_1^2J_4^2}, \ 
  J_{1,2}=\frac{J_1^2}{J_2}, \ J_{1,4}=\frac{J_1J_4}{J_2}.\notag
\end{gather}
\end{subequations}}%
Also following from the definitions are the general identities:
{\allowdisplaybreaks \begin{subequations}
\begin{gather}
j(q^n x;q)=(-1)^nq^{-\binom{n}{2}}x^{-n}j(x;q), \ \ n\in\mathbb{Z},\label{equation:j-elliptic}\\
j(x;q)=j(q/x;q)\label{equation:j-flip},\\
j(z;q)=\sum_{k=0}^{m-1}(-1)^k q^{\binom{k}{2}}z^k
j\big ((-1)^{m+1}q^{\binom{m}{2}+mk}z^m;q^{m^2}\big ).\label{equation:j-split}
\end{gather}
\end{subequations}}%

\begin{lemma}\label{lemma:1} We have
\begin{equation}
    j(iq;q^2)=\frac{(q^4;q^4)_{\infty}^2}{(q^8;q^8)_{\infty}}.\label{equation:jID-1}
\end{equation}
\end{lemma}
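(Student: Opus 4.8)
The plan is to reduce $j(iq;q^2)$ to a single theta value whose product expansion visibly matches the right-hand side of (\ref{equation:jID-1}). The quickest way is to invoke the splitting identity (\ref{equation:j-split}) with $m=2$: replacing $q$ by $q^2$ and taking $z=iq$, and using $(iq)^2=-q^2$ together with $\binom{2}{2}=1$, the two-term sum becomes $j(q^4;q^8)$ (the $k=0$ term) plus a scalar multiple of $j(q^8;q^8)$ (the $k=1$ term). The latter vanishes, since $j(1;q)=(1;q)_\infty(q;q)_\infty^2=0$, so $j(iq;q^2)=j(q^4;q^8)$. Expanding the definition, $j(q^4;q^8)=(q^4;q^8)_\infty(q^4;q^8)_\infty(q^8;q^8)_\infty$, and substituting the elementary identity $(q^4;q^8)_\infty=(q^4;q^4)_\infty/(q^8;q^8)_\infty$ gives exactly $(q^4;q^4)_\infty^2/(q^8;q^8)_\infty$, as claimed.

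An equivalent route avoids (\ref{equation:j-split}) and works directly with series. By Jacobi's triple product identity (\ref{equation:JTPid}) with $q\mapsto q^2$ and $x=iq$ one has $j(iq;q^2)=\sum_{n\in\Z}(-1)^nq^{2\binom{n}{2}}(iq)^n=\sum_{n\in\Z}(-i)^nq^{n^2}$. Splitting $n$ into even and odd residues, the odd-indexed terms cancel in pairs under $n\mapsto -1-n$, because the exponent $n^2$ is preserved while the coefficient $(-i)^n$ changes sign; hence $j(iq;q^2)=\sum_{m\in\Z}(-1)^mq^{4m^2}$. A second application of (\ref{equation:JTPid}) (now with $q\mapsto q^8$ and $x=q^4$) evaluates this as $(q^4;q^8)_\infty^2(q^8;q^8)_\infty=(q^4;q^4)_\infty^2/(q^8;q^8)_\infty$.

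There is no genuine obstacle in this lemma; it is a short direct computation. The only points that warrant a moment's care are the vanishing of the spurious $j(q^8;q^8)$ term in the first approach --- equivalently, the pairing cancellation of the odd-index terms in the series form --- and the routine bookkeeping that rewrites products over arithmetic progressions modulo $8$ into the quotient appearing on the right of (\ref{equation:jID-1}).
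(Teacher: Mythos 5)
Your argument is correct, but it takes a different route from the paper's. The paper proves the lemma in one line by pure product rearrangement: since $q^2/(iq)=-iq$, one has $j(iq;q^2)=(q^2;q^2)_\infty(iq;q^2)_\infty(-iq;q^2)_\infty$, the conjugate factors combine into $(-q^2;q^4)_\infty$, and sorting the remaining products over residues modulo $8$ gives the right-hand side of (\ref{equation:jID-1}). Your first route instead invokes the splitting identity (\ref{equation:j-split}) with $m=2$ to get $j(iq;q^2)=j(q^4;q^8)-iq\,j(q^8;q^8)$ and kills the second term via $(1;q^8)_\infty=0$; your series argument is the sum-side shadow of the same computation. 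All three are elementary and about equally short; what your approach buys is that it makes visible \emph{why} the answer is a lacunary theta series supported on the exponents $4m^2$ (namely $j(iq;q^2)=\sum_{m\in\mathbb{Z}}(-1)^mq^{4m^2}$), which is the same $m=2$ splitting mechanism the authors themselves deploy in the proof of Proposition \ref{proposition:prop-4m} and in the concluding remarks, whereas the paper's product rearrangement is the more economical bookkeeping. One small correction to your second route: the involution that cancels the odd-indexed terms of $\sum_{n}(-i)^nq^{n^2}$ is $n\mapsto -n$, not $n\mapsto -1-n$; the latter preserves neither the exponent $n^2$ nor the parity of $n$. Under $n\mapsto -n$ one has $(-i)^{-n}=i^{n}=-(-i)^n$ for odd $n$, which is exactly the sign change you need, so the conclusion stands.
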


\begin{proof}[Proof of Lemma \ref{lemma:1}] A straightforward product rearrangement yields
\begin{equation*}
j(iq;q^2)=(q^2;q^2)_{\infty}(iq;q^2)_{\infty}(-iq;q^2)_{\infty}
=\frac{(q^4;q^4)_{\infty}^2}{(q^8;q^8)_{\infty}}.\qedhere
\end{equation*}
\end{proof}

We recall the following definition of an Appell function \cite{HM}:
\begin{equation}
m(x,z;q):=\frac{1}{j(z;q)}\sum_{r=-\infty}^{\infty}\frac{(-1)^rq^{\binom{r}{2}}z^r}{1-q^{r-1}xz}.\label{equation:mdef-eq}
\end{equation}
Appell functions are the building blocks of Ramanujan's classical mock theta functions \cite[Section 5]{HM}.  We will use the following properties.  We recall \cite[Corollary 3.2]{HM} 
\begin{align}
m(q,-1;q^2)&=\frac{1}{2}\label{equation:HM-eq3.3},\\
m(-1,q;q^2)&=0,\label{equation:HM-eq3.4}
\end{align}
as well as \cite[Theorem 3.3]{HM}:
\begin{equation}
m(x,z_1;q)-m(x,z_0;q)=\frac{z_0J_1^3j(z_1/z_0;q)j(xz_1z_0;q)}{j(z_0;q)j(z_1;q)j(xz_0;q)j(xz_1;q)}.\label{equation:changing-z}
\end{equation}

\section{Preliminaries:  binary quadratic forms and class numbers}\label{section:prelim-Hurwitz}

We review the basic terminology and facts for binary quadratic forms and class numbers \cite{Co, Za}.  Let
\begin{equation*}
f(x,y):=ax^2+bxy+cy^2
\end{equation*}
be a binary quadratic form.  The discriminant of $f(x,y)$ is denoted $D(f):=b^2-4ac$.

\begin{definition} \cite[Definition $5.3.2$]{Co}
 A positive definite quadratic form $(a,b,c)$ of discriminant $D$ is said to be {\em reduced} if $|b|\le a\le c$ and if, in addition, when one of the two inequalities is an equality (this means that either $|b|=a$ or $a=c$), then $b\ge0$.
\end{definition}

It turns out that the number of reduced forms of a given discriminant $D$ is finite in number \cite[p. 59]{Za}.

\begin{definition} 
A reduced form is defined to be {\em primitive} if $\gcd(a,b,c)=1$, and it is defined to be {\em imprimitive} otherwise. 
\end{definition}

\begin{definition}\cite[p. 226]{Co}
The {\em class number} $h(D)$ is defined to be the number of primitive positive definite reduced quadratic forms of discriminant $D$.
\end{definition}

\begin{definition}\label{definition:Cohen-definition536}  \cite[Definition $5.3.6$]{Co} Let $N$ be a non-negative integer.  The {\em Hurwitz class number} $H(N)$ is defined as follows.
\begin{itemize}
\item[(1)]  If $N\equiv 1, 2 \pmod 4$ then $H(N)=0$.
\item[(2)]  If $N=0$ then $H(0)=-1/12$.
\item[(3)]  For all other cases (i.e. if $N\equiv 0 \ \textup{or } 3 \pmod 4$ and $N>0$) we define $H(N)$ as the class number of not necessarily primitive (positive definite) quadratic forms of discriminant $-N$, except that forms equivalent to $a(x^2+y^2)$ should be counted with weight $1/2$, and those equivalent to $a(x^2+xy+y^2)$ with weight $1/3$.
\end{itemize}
\end{definition}

\begin{lemma} \cite[Lemma $3.5$]{Mo2017B}\label{lemma:mult-4} For positive numbers $n$, where $n\equiv 3 \pmod 4$, we have
{\allowdisplaybreaks \begin{align}
H(4n)=4H(n) \ \textup{for }n\equiv 3\pmod 8,\label{equation:delta-3mod8}\\
H(4n)=2H(n) \ \textup{for }n\equiv 7\pmod 8.\label{equation:delta-7mod8}
\end{align}}%
\end{lemma}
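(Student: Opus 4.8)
The plan is to reduce the two identities to the classical formula for the class number of a non-maximal order, leaving only a single Kronecker symbol modulo $8$ to evaluate. Fix $n\equiv 3\pmod 4$; then $-n\equiv 1\pmod 4$ is a negative discriminant, and I would write $-n=D_0m^2$ with $D_0$ the fundamental discriminant of $\mathbb{Q}(\sqrt{-n})$. Since $-n$ is odd, $m$ is odd and $D_0$ is an odd fundamental discriminant, so $D_0\equiv 1\pmod 4$ and $D_0\in\{-3,-7,-11,-15,\dots\}$; in particular $D_0\ne -4$, though the computation below is uniform in $D_0$ anyway.

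First I would record the conductor decomposition of the Hurwitz class number. Every positive definite form of discriminant $-N$ is uniquely $g$ times a primitive form of discriminant $-N/g^2$ with $g^2\mid N$, and multiplying a form by $g$ does not change its group of automorphs; hence, writing $h_w(-D)$ for the count of primitive classes of discriminant $-D$ with the class of $(1,0,1)$ weighted $\tfrac12$ and the class of $(1,1,1)$ weighted $\tfrac13$ (so $h_w(-3)=\tfrac13$, $h_w(-4)=\tfrac12$, and $h_w(-D)=h(-D)$ for all other $D$), Definition \ref{definition:Cohen-definition536} gives
\[
H(N)=\sum_{\substack{g^2\mid N\\ -N/g^2\,\equiv\,0,1\,(4)}}h_w(-N/g^2).
\]
Since $|D_0|$ is squarefree one checks $g^2\mid n\iff g\mid m$, so $H(n)=\sum_{g\mid m}h_w\big(D_0(m/g)^2\big)$; and for $4n$ the admissible $g$ are exactly the odd divisors of $m$ together with their doubles, which after reindexing yields $H(4n)=\sum_{d\mid m}h_w(4D_0d^2)+H(n)$.

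Next I would use that $D_0d^2$ and $4D_0d^2=D_0(2d)^2$ are the discriminants of the orders of conductor $d$ and $2d$ in $\mathbb{Q}(\sqrt{D_0})$. Folding the unit-index correction of the order class number formula (see \cite{Co}) into the weights puts it in the clean form
\[
h_w(D_0f^2)=f\,h_w(D_0)\prod_{p\mid f}\Big(1-\big(\tfrac{D_0}{p}\big)\tfrac1p\Big),\qquad f\ge 1.
\]
Because $d$ is odd, applying this with $f=2d$ and with $f=d$ and dividing gives, for every $d\mid m$,
\[
h_w(4D_0d^2)=2\Big(1-\big(\tfrac{D_0}{2}\big)\tfrac12\Big)h_w(D_0d^2)=\Big(2-\big(\tfrac{D_0}{2}\big)\Big)h_w(D_0d^2).
\]
Summing over $d\mid m$ gives $\sum_{d\mid m}h_w(4D_0d^2)=\big(2-(\tfrac{D_0}{2})\big)H(n)$, hence $H(4n)=\big(3-(\tfrac{D_0}{2})\big)H(n)$. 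Finally, $m$ odd forces $D_0\equiv D_0m^2=-n\pmod 8$, so the Kronecker symbol $(\tfrac{D_0}{2})=(\tfrac{-n}{2})$ equals $-1$ when $n\equiv 3\pmod 8$ and $+1$ when $n\equiv 7\pmod 8$, giving $H(4n)=4H(n)$ and $H(4n)=2H(n)$ respectively.

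The one point requiring care is the clean form of the order class number formula above: one must check that the index $[\mathcal{O}_{D_0}^{\times}:\mathcal{O}_f^{\times}]$ together with the extra automorphs of $(1,0,1)$ and $(1,1,1)$ cancel to leave precisely $f\,h_w(D_0)\prod_{p}(1-(\tfrac{D_0}{p})\tfrac1p)$, including the degenerate conductor $f=1$ and the field $\mathbb{Q}(\sqrt{-3})$ (the would-be exceptional case $D_0=-4$ does not arise here, since $-n$ is odd). Everything else is divisor bookkeeping. A more elementary alternative would be to exhibit a weight-preserving $4{:}1$ (resp.\ $2{:}1$) correspondence between reduced forms of discriminant $-4n$ and of discriminant $-n$ directly, but tracking the reduction step and the equality cases in the definition of ``reduced'' makes that route considerably messier than the computation sketched here.
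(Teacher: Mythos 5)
Your argument is correct, but it cannot be compared line-by-line with ``the paper's own proof'' because the paper offers none: Lemma \ref{lemma:mult-4} is imported verbatim from \cite[Lemma 3.5]{Mo2017B} and used as a black box (in Cases 3a and 4 of the proof of Theorem \ref{theorem:theorem1-nPlusr2}). What you supply is a clean, self-contained derivation. Your two main ingredients check out: the conductor decomposition $H(N)=\sum_{g^2\mid N}h_w(-N/g^2)$ follows directly from Definition \ref{definition:Cohen-definition536}, since a form of content $g$ is $g$ times a primitive form of discriminant $-N/g^2$ and the weights $\tfrac12$, $\tfrac13$ are attached to the underlying primitive class; and the normalized order class-number formula $h_w(D_0f^2)=f\,h_w(D_0)\prod_{p\mid f}\bigl(1-\bigl(\tfrac{D_0}{p}\bigr)\tfrac1p\bigr)$ does absorb the unit index correctly (one verifies it on $D_0=-3,-4$ with $f=2$, and $D_0=-4$ never occurs here since $-n$ is odd). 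The bookkeeping for $4n$ (admissible $g$ are the odd divisors of $m$ and their doubles, $m$ odd) is right, and the endgame $H(4n)=\bigl(3-\bigl(\tfrac{-n}{2}\bigr)\bigr)H(n)$ specializes correctly to the two congruence classes; spot checks at $n=3,7,11,15$ confirm it. It is worth noting that your ``messier'' alternative --- a weight-preserving $4{:}1$ or $2{:}1$ matching of reduced forms --- is in fact the style of argument the paper itself deploys in Section \ref{section:newAC-FC} (e.g.\ category $(7)$ in Case 3a consists precisely of doubled forms of discriminant $-n$), so that route would harmonize with the surrounding text; but your analytic route is shorter, uniform in $n\equiv 3\pmod 4$, and proves slightly more than the lemma asserts.
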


\section{Proof of Theorem \ref{theorem:result-DKM}}\label{section:newAC}
\subsection{The right-hand side} We specialize Theorem \ref{theorem:result}, with $y=ix$, $z=-ix$, $x\to -1$.  We first focus on the right-hand side of (\ref{equation:thm-result}) and set $y=ix$, $z=-ix$.  We obtain
 {\allowdisplaybreaks \begin{align*}
RHS
&=  \frac{(x^2,q^2/x^2;q^2)_{\infty}}{(ix,-ix,-iq/x,iq/x;q)_{\infty}}
\frac{(q;q)_{\infty}^2}{(q^2;q^2)_{\infty}}
 \sum_{k\in \mathbb{Z}}\frac{(-1)^kq^{k^2}(x^2)^k}{1+q^{2k}x} \\
&\ \ \ \ \ + \frac{(ix^2,-iq^2/x^2;q^2)_{\infty}}{(x,ix,q/x,-iq/x;q)_{\infty}}
 \frac{(q;q)_{\infty}^2}{(q^2;q^2)_{\infty}}
 \sum_{k\in \mathbb{Z}} \frac{(-1)^kq^{k^2}(ix^2)^k}{1-q^{2k}ix} \\
&\ \ \ \ \ -2  \frac{(q^2;q^2)_{\infty}^3}{(x,ix,-ix,q/x,-iq/x,iq/x;q)_{\infty}}
 \frac{(ix^2,-ix^2,x^2,-iq^2/x^2,iq^2/x^2,q^2/x^2;q^2)_{\infty}}
 {(-x,-ix,ix,-q^2/x,iq^2/x,-iq^2/x;q^2)_{\infty}} \\
&\ \ \ \ \ + \frac{(-ix^2,iq^2/x^2;q^2)_{\infty}}{(x,-ix,q/x,iq/x;q)_{\infty}}
 \frac{(q;q)_{\infty}^2}{(q^2;q^2)_{\infty}}  
 \sum_{k\in \mathbb{Z}} \frac{(-1)^kq^{k^2}(-ix^2)^k}{1+q^{2k}ix}.
 \end{align*}}%
 
 We want to let $x\to -1$.  We denote the four summands respectively as $S1$, $S2$, $T3$, $S4$.  The only potential singularities are in $S1$ and $T3$.  Let us simplify in order to remove singularities.  This gives
 {\allowdisplaybreaks \begin{align*}
RHS &=  \frac{(1-x^2)(q^2x^2,q^2/x^2;q^2)_{\infty}}{(ix,-ix,-iq/x,iq/x;q)_{\infty}}
\frac{(q;q)_{\infty}^2}{(q^2;q^2)_{\infty}}
 \sum_{k\in \mathbb{Z}}\frac{(-1)^kq^{k^2}(x^2)^k}{1+q^{2k}x} \\
&\qquad + \frac{(ix^2,-iq^2/x^2;q^2)_{\infty}}{(x,ix,q/x,-iq/x;q)_{\infty}}
 \frac{(q;q)_{\infty}^2}{(q^2;q^2)_{\infty}}
 \sum_{k\in \mathbb{Z}} \frac{(-1)^kq^{k^2}(ix^2)^k}{1-q^{2k}ix} \\
&\qquad -2  \frac{(q^2;q^2)_{\infty}^3}{(1-x)(xq,ix,-ix,q/x,-iq/x,iq/x;q)_{\infty}}\\
&\qquad \qquad \cdot \frac{(1-x^2)(ix^2,-ix^2,x^2q^2,-iq^2/x^2,iq^2/x^2,q^2/x^2;q^2)_{\infty}}
 {(1+x)(-xq^2,-ix,ix,-q^2/x,iq^2/x,-iq^2/x;q^2)_{\infty}} \\
&\qquad + \frac{(-ix^2,iq^2/x^2;q^2)_{\infty}}{(x,-ix,q/x,iq/x;q)_{\infty}}
 \frac{(q;q)_{\infty}^2}{(q^2;q^2)_{\infty}}  
 \sum_{k\in \mathbb{Z}} \frac{(-1)^kq^{k^2}(-ix^2)^k}{1+q^{2k}ix}.
 \end{align*}}%
 Simplifying and rearranging factors brings us to
 {\allowdisplaybreaks \begin{align*}
 RHS
 &=  \frac{(1-x)(q^2x^2,q^2/x^2;q^2)_{\infty}}{(ix,-ix,-iq/x,iq/x;q)_{\infty}}
\frac{(q;q)_{\infty}^2}{(q^2;q^2)_{\infty}}
(1+x) \sum_{k\in \mathbb{Z}}\frac{(-1)^kq^{k^2}(x^2)^k}{1+q^{2k}x} \\
&\qquad + \frac{(ix^2,-iq^2/x^2;q^2)_{\infty}}{(x,ix,q/x,-iq/x;q)_{\infty}}
 \frac{(q;q)_{\infty}^2}{(q^2;q^2)_{\infty}}
 \sum_{k\in \mathbb{Z}} \frac{(-1)^kq^{k^2}(ix^2)^k}{1-q^{2k}ix} \\
&\qquad -2  \frac{(q^2;q^2)_{\infty}^3}{(xq,ix,-ix,q/x,-iq/x,iq/x;q)_{\infty}}\\
&\qquad \qquad \cdot \frac{(ix^2,-ix^2,x^2q^2,-iq^2/x^2,iq^2/x^2,q^2/x^2;q^2)_{\infty}}
 {(-xq^2,-ix,ix,-q^2/x,iq^2/x,-iq^2/x;q^2)_{\infty}} \\
&\qquad + \frac{(-ix^2,iq^2/x^2;q^2)_{\infty}}{(x,-ix,q/x,iq/x;q)_{\infty}}
 \frac{(q;q)_{\infty}^2}{(q^2;q^2)_{\infty}}  
 \sum_{k\in \mathbb{Z}} \frac{(-1)^kq^{k^2}(-ix^2)^k}{1+q^{2k}ix}.
\end{align*}}%
Letting $x\to -1$ results in
{\allowdisplaybreaks \begin{align*}
 RHS
 &=  2\frac{(q^2,q^2;q^2)_{\infty}}{(-i,i,iq,-iq;q)_{\infty}}
\frac{(q;q)_{\infty}^2}{(q^2;q^2)_{\infty}}\\
&\qquad + \frac{(i,-iq^2;q^2)_{\infty}}{(-1,-i,-q,iq;q)_{\infty}}
 \frac{(q;q)_{\infty}^2}{(q^2;q^2)_{\infty}}
 \sum_{k\in \mathbb{Z}} \frac{(-1)^kq^{k^2}(i)^k}{1+q^{2k}i} \\
&\qquad -2  \frac{(q^2;q^2)_{\infty}^3}{(-q,-i,i,-q,iq,-iq;q)_{\infty}}
 \cdot \frac{(i,-i,q^2,-iq^2,iq^2,q^2;q^2)_{\infty}}
 {(q^2,i,-i,q^2,-iq^2,iq^2;q^2)_{\infty}} \\
&\qquad + \frac{(-i,iq^2;q^2)_{\infty}}{(-1,i,-q,-iq;q)_{\infty}}
 \frac{(q;q)_{\infty}^2}{(q^2;q^2)_{\infty}}  
 \sum_{k\in \mathbb{Z}} \frac{(-1)^kq^{k^2}(-i)^k}{1-q^{2k}i}.
\end{align*}}%
Now we consider the four summands individually.  We first look at $S1$.  We have
{\allowdisplaybreaks \begin{align}
S1
&=2\frac{(q^2,q^2;q^2)_{\infty}}{(-i,i,iq,-iq;q)_{\infty}}
\frac{(q;q)_{\infty}^2}{(q^2;q^2)_{\infty}}\notag\\
&=\frac{2}{(1-i)(1+i)}\frac{(q^2;q^2)_{\infty}(q;q)_{\infty}^2}{(-iq,iq,iq,-iq;q)_{\infty}}\notag\\
&=\frac{(q^2;q^2)_{\infty}(q;q)_{\infty}^2}{(-q^2;q^2)_{\infty}^2}\notag\\
&=\frac{(q^2;q^2)_{\infty}^3(q;q)_{\infty}^2}{(q^4;q^4)_{\infty}^2}.
\label{equation:S1-final}
\end{align}}%
Now we consider $T3$.  We have
{\allowdisplaybreaks \begin{align}
T3
&=-2  \frac{(q^2;q^2)_{\infty}^3}{(-q,-i,i,-q,iq,-iq;q)_{\infty}}
 \cdot \frac{(i,-i,q^2,-iq^2,iq^2,q^2;q^2)_{\infty}}
 {(q^2,i,-i,q^2,-iq^2,iq^2;q^2)_{\infty}}\notag \\
 &=-\frac{2}{(1+i)(1-i)}  \frac{(q^2;q^2)_{\infty}^3}{(-q,-iq,iq,-q,iq,-iq;q)_{\infty}}\notag\\
&\qquad  \cdot \frac{(1-i)(1+i)}{(1-i)(1+i)}\frac{(iq^2,-iq^2,q^2,-iq^2,iq^2,q^2;q^2)_{\infty}}
 {(q^2,iq^2,-iq^2,q^2,-iq^2,iq^2;q^2)_{\infty}} \notag\\
 &=- \frac{(q^2;q^2)_{\infty}^3}{(-q,-iq,iq;q)_{\infty}^2}\notag\\
 &=- \frac{(q^2;q^2)_{\infty}^3(q;q)_{\infty}^2}{(q^4;q^4)_{\infty}^2}.
 \label{equation:T3-final}
\end{align}}%
Moving on to $S2$ we find that
{\allowdisplaybreaks \begin{align*}
S2
&= \frac{(i,-iq^2;q^2)_{\infty}}{(-1,-i,-q,iq;q)_{\infty}}
 \frac{(q;q)_{\infty}^2}{(q^2;q^2)_{\infty}}
 \sum_{k\in \mathbb{Z}} \frac{(-1)^kq^{k^2}(i)^k}{1+q^{2k}i} \\
 &= \frac{(1-i)(iq^2,-iq^2;q^2)_{\infty}}{2(1+i)(-q,-iq,-q,iq;q)_{\infty}}
 \frac{(q;q)_{\infty}^2}{(q^2;q^2)_{\infty}}
 \sum_{k\in \mathbb{Z}} \frac{(-1)^kq^{k^2}(i)^k}{1+q^{2k}i} \\
 &= \frac{(1-i)(-q^4;q^4)_{\infty}}{2(1+i)(-q^2;q^2)_{\infty}(-q;q)_{\infty}^2}
 \frac{(q;q)_{\infty}^2}{(q^2;q^2)_{\infty}}
 \sum_{k\in \mathbb{Z}} \frac{(-1)^kq^{k^2}(i)^k}{1+q^{2k}i} \\
 &=-\frac{i}{2} \frac{(q^8;q^8)_{\infty}}{(q^4;q^4)_{\infty}^2}
 \frac{(q;q)_{\infty}^4}{(q^2;q^2)_{\infty}^2}
 \sum_{k\in \mathbb{Z}} \frac{(-1)^kq^{k^2}(i)^k}{1+q^{2k}i}.
\end{align*}}%
We use Appell function properties to rewrite $S2$.  This produces
{\allowdisplaybreaks \begin{align*}
S2
 &=-\frac{i}{2} \frac{(q^8;q^8)_{\infty}}{(q^4;q^4)_{\infty}^2}
 \frac{(q;q)_{\infty}^4}{(q^2;q^2)_{\infty}^2}
 \sum_{k\in \mathbb{Z}} \frac{(-1)^kq^{k^2}(i)^k}{1+q^{2k}i} \\
  &=-\frac{i}{2} \frac{(q^8;q^8)_{\infty}}{(q^4;q^4)_{\infty}^2}
 \frac{(q;q)_{\infty}^4}{(q^2;q^2)_{\infty}^2}
j(iq;q^2)m( -q,iq;q^2).
\end{align*}}%
Using (\ref{equation:changing-z}) with $q\to q^2$, $x\to -q$, $z_1\to iq$, $z_0\to -1$, as well as (\ref{equation:HM-eq3.3}), we can write
{\allowdisplaybreaks \begin{align*}
S2
&=-\frac{i}{2} \frac{(q^8;q^8)_{\infty}}{(q^4;q^4)_{\infty}^2}
 \frac{(q;q)_{\infty}^4}{(q^2;q^2)_{\infty}^2}
j(iq;q^2)m( -q,iq;q^2)\\
&=-\frac{i}{2} \frac{(q^8;q^8)_{\infty}}{(q^4;q^4)_{\infty}^2}
 \frac{(q;q)_{\infty}^4}{(q^2;q^2)_{\infty}^2}
j(iq;q^2)\left ( \frac{1}{2} -\frac{J_2^3j(-iq;q^2)j(iq^2;q^2)}{j(-1;q^2)j(iq;q^2)j(q;q^2)j(-iq^2;q^2)}\right ).
\end{align*}}%
Using (\ref{equation:j-flip}) allows us to write
\begin{align*}
S2
&=-\frac{i}{2} \frac{(q^8;q^8)_{\infty}}{(q^4;q^4)_{\infty}^2}
 \frac{(q;q)_{\infty}^4}{(q^2;q^2)_{\infty}^2}
j(iq;q^2)\left ( \frac{1}{2} -\frac{J_2^3j(iq;q^2)j(iq^2;q^2)}{j(-1;q^2)j(iq;q^2)j(q;q^2)j(i;q^2)}\right ).
\end{align*}
Simplifying, using (\ref{equation:jID-1}), and employing (\ref{equation:j-elliptic}) brings us to
{\allowdisplaybreaks \begin{align*}
S2
&=-\frac{i}{2} \frac{(q^8;q^8)_{\infty}}{(q^4;q^4)_{\infty}^2}
 \frac{(q;q)_{\infty}^4}{(q^2;q^2)_{\infty}^2}
\frac{(q^4;q^4)_{\infty}^2}{(q^8;q^8)_{\infty}}\left ( \frac{1}{2} -i\frac{J_2^3j(i;q^2)}{j(-1;q^2)j(q;q^2)j(i;q^2)}\right )\\
&=-\frac{i}{2} \frac{(q^8;q^8)_{\infty}}{(q^4;q^4)_{\infty}^2}
 \frac{(q;q)_{\infty}^4}{(q^2;q^2)_{\infty}^2}
\frac{(q^4;q^4)_{\infty}^2}{(q^8;q^8)_{\infty}}\left ( \frac{1}{2} 
-i\frac{(q^2;q^2)_{\infty}^3(q^2;q^2)_{\infty}(q^2;q^2)_{\infty}}{2(q^4;q^4)_{\infty}^2(q;q)_{\infty}^2}\right )\\
&=-\frac{i}{4} 
 \frac{(q;q)_{\infty}^4}{(q^2;q^2)_{\infty}^2}
 -\frac{1}{4} 
 \frac{(q;q)_{\infty}^2(q^2;q^2)_{\infty}^3}{(q^4;q^4)_{\infty}^2}.
\end{align*}}%
The sum $S4$ is the complex conjugate of $S2$.  Hence
\begin{equation}
S2+S4
=-\frac{1}{2} 
 \frac{(q;q)_{\infty}^2(q^2;q^2)_{\infty}^3}{(q^4;q^4)_{\infty}^2}
=-\frac{1}{2}\left ( \frac{(q;q)_{\infty}}{(-q;q)_{\infty}}\right ) 
\left( \frac{(q^2;q^2)_{\infty}}{(-q^2;q^2)_{\infty}}\right )^2.
\label{equation:S2S4-final}
\end{equation}
Recalling (\ref{equation:S1-final}), (\ref{equation:T3-final}), and (\ref{equation:S2S4-final}) yields
{\allowdisplaybreaks \begin{align}
RHS &= S1 + T3 + S2 + S4 \notag \\
&=\frac{(q^2;q^2)_{\infty}^3(q;q)_{\infty}^2}{(q^4;q^4)_{\infty}^2}
- \frac{(q^2;q^2)_{\infty}^3(q;q)_{\infty}^2}{(q^4;q^4)_{\infty}^2}
-\frac{1}{2}\left ( \frac{(q;q)_{\infty}}{(-q;q)_{\infty}}\right ) 
\left( \frac{(q^2;q^2)_{\infty}}{(-q^2;q^2)_{\infty}}\right )^2\notag\\
&=-\frac{1}{2}\left ( \frac{(q;q)_{\infty}}{(-q;q)_{\infty}}\right ) 
\left( \frac{(q^2;q^2)_{\infty}}{(-q^2;q^2)_{\infty}}\right )^2.
\label{equation:RHS-final}
\end{align}}%

\subsection{The left-hand side}
We first rewrite the left-hand side in the following form.  Using the left-hand side as in Theorem \ref{theorem:result} gives
{\allowdisplaybreaks \begin{align*}
&\left ( \sum_{r,s,t\ge 0}+\sum_{r,s,t<0} \right )q^{rs+rt+st}x^{r}y^{s}z^{t}\\
&\qquad =1+
 \sum_{r,s\ge 1}q^{rs}x^{r}y^{s}
 + \sum_{r,t\ge 1}q^{rt}x^{r}z^{t}
 + \sum_{s,t\ge 1}q^{st}y^{s}z^{t}\\
 & \qquad \qquad + \sum_{r\ge 1}x^{r}
 + \sum_{s\ge 1}y^{s}
 + \sum_{t\ge 1}z^{t}
 +\left ( \sum_{r,s,t> 0}+\sum_{r,s,t<0} \right )q^{rs+rt+st}x^{r}y^{s}z^{t}.
\end{align*}}%
We make the substitutions $(y,z)\to(ix,-ix)$, to obtain
{\allowdisplaybreaks \begin{align*}
&\left ( \sum_{r,s,t\ge 0}+\sum_{r,s,t<0} \right )q^{rs+rt+st}x^{r}(ix)^{s}(-ix)^{t}\\
&\qquad =1+
 \sum_{r,s\ge 1}q^{rs}x^{r+s}i^{s}
 + \sum_{r,t\ge 1}q^{rt}x^{r+t}(-i)^{t}
 + \sum_{s,t\ge 1}q^{st}x^{s+t}(-1)^{t}i^{s+t}\\
 & \qquad \qquad + \sum_{r\ge 1}x^{r}
 + \sum_{s\ge 1}(ix)^{s}
 + \sum_{t\ge 1}(-ix)^{t}
 +\left ( \sum_{r,s,t> 0}+\sum_{r,s,t<0} \right )q^{rs+rt+st}x^{r+s+t}(-1)^{t}i^{s+t}\\
 &\qquad =1+
 \sum_{r,s\ge 1}q^{rs}x^{r+s}i^{s}
 + \sum_{r,t\ge 1}q^{rt}x^{r+t}(-i)^{t}
 + \sum_{s,t\ge 1}q^{st}x^{s+t}(-1)^{t}i^{s+t}\\
 & \qquad \qquad + \frac{x}{1-x} + \frac{ix}{1-ix} - \frac{ix}{1+ix}
 +\left ( \sum_{r,s,t> 0}+\sum_{r,s,t<0} \right )q^{rs+rt+st}x^{r+s+t}(-1)^{t}i^{s+t}.
\end{align*}}%
We now let $x\to -1$ to have
{\allowdisplaybreaks \begin{align}
LHS 
  & =- \frac{1}{2} +
 \sum_{r,s\ge 1}q^{rs}(-1)^{r+s}i^{s}
 + \sum_{r,t\ge 1}q^{rt}(-1)^{r}i^{t}
 + \sum_{s,t\ge 1}q^{st}(-1)^{s}i^{s+t}\notag \\
 & \qquad \qquad 
 +\left ( \sum_{r,s,t> 0}+\sum_{r,s,t<0} \right )q^{rs+rt+st}(-1)^{r+s}i^{s+t}.
 \label{equation:LHS-imaginary}
\end{align}}%
We now rewrite the above expression without imaginary terms.  We focus on the top line of (\ref{equation:LHS-imaginary}) and start expanding to have
{\allowdisplaybreaks \begin{align*}
LHS 
& =-\frac{1}{2} +
 \sum_{r,s\ge 1}q^{2rs}(-1)^{r+s}+ \sum_{r,s\ge 1}q^{r(2s-1)}(-1)^{r-1}i^{2s-1}\\
  &\qquad + \sum_{r,t\ge 1}q^{2rt}(-1)^{r}i^{2t}+ \sum_{r,t\ge 1}q^{r(2t-1)}(-1)^{r}i^{2t-1}\\
  &\qquad + \sum_{s,t\ge 1}q^{4st}(-1)^{2s}i^{2s+2t}+ \sum_{s,t\ge 1}q^{(2s-1)(2t-1)}(-1)^{2s-1}i^{2s+2t-2}\\
  &\qquad + \sum_{s,t\ge 1}q^{2s(2t-1)}(-1)^{2s}i^{2s+2t-1}+ \sum_{s,t\ge 1}q^{(2s-1)2t}(-1)^{2s-1}i^{2s-1+2t}\\
 & \qquad \qquad 
 +\left ( \sum_{r,s,t> 0}+\sum_{r,s,t<0} \right )q^{rs+rt+st}(-1)^{r+s}i^{s+t}\\
 & =-\frac{1}{2} +
 \sum_{r,s\ge 1}q^{2rs}(-1)^{r+s}+  i\sum_{r,s\ge 1}q^{r(2s-1)}(-1)^{r+s}\\
  &\qquad + \sum_{r,t\ge 1}q^{2rt}(-1)^{r+t}-i\sum_{r,t\ge 1}q^{r(2t-1)}(-1)^{r+t}\\
  &\qquad + \sum_{s,t\ge 1}q^{4st}(-1)^{s+t}+ \sum_{s,t\ge 1}q^{(2s-1)(2t-1)}(-1)^{s+t}\\
  &\qquad + i\sum_{s,t\ge 1}q^{2s(2t-1)}(-1)^{s+t}- i\sum_{s,t\ge 1}q^{(2s-1)2t}(-1)^{s+t}\\
 & \qquad \qquad 
 +\left ( \sum_{r,s,t> 0}+\sum_{r,s,t<0} \right )q^{rs+rt+st}(-1)^{r+s}i^{s+t}\\
 & =-\frac{1}{2} +
2 \sum_{r,s\ge 1}q^{2rs}(-1)^{r+s}
 + \sum_{s,t\ge 1}q^{4st}(-1)^{s+t}+ \sum_{s,t\ge 1}q^{(2s-1)(2t-1)}(-1)^{s+t}\\
 & \qquad \qquad 
 +\left ( \sum_{r,s,t> 0}+\sum_{r,s,t<0} \right )q^{rs+rt+st}(-1)^{r+s}i^{s+t}.
\end{align*}}%
Now we focus on the bottom line of (\ref{equation:LHS-imaginary}).  We have
{\allowdisplaybreaks \begin{align*}
LHS 
& =- \frac{1}{2} +
2 \sum_{r,s\ge 1}q^{2rs}(-1)^{r+s}
 + \sum_{s,t\ge 1}q^{4st}(-1)^{s+t}+ \sum_{s,t\ge 1}q^{(2s-1)(2t-1)}(-1)^{s+t}\\
 & \qquad 
 + \sum_{r,s,t> 0}q^{r2s+r2t+4st}(-1)^{r+2s}i^{2s+2t}\\
&\qquad  + \sum_{r,s,t> 0}q^{r(2s-1)+r(2t-1)+(2s-1)(2t-1)}(-1)^{r+2s-1}i^{2s+2t-2}\\
 & \qquad + \sum_{r,s,t> 0}q^{r2s+r(2t-1)+2s(2t-1)}(-1)^{r+2s}i^{2s+2t-1}\\
 & \qquad   + \sum_{r,s,t> 0}q^{r(2s-1)+r2t+(2s-1)2t}(-1)^{r+2s-1}i^{2s+2t-1}\\
  & \qquad 
 + \sum_{r,s,t< 0}q^{r2s+r2t+4st}(-1)^{r+2s}i^{2s+2t}\\
&\qquad  + \sum_{r,s,t< 0}q^{r(2s+1)+r(2t+1)+(2s+1)(2t+1)}(-1)^{r+2s+1}i^{2s+2t+2}\\
 & \qquad + \sum_{r,s,t< 0}q^{r2s+r(2t+1)+2s(2t+1)}(-1)^{r+2s}i^{2s+2t+1}\\
 & \qquad   + \sum_{r,s,t< 0}q^{r(2s+1)+r2t+(2s+1)2t}(-1)^{r+2s+1}i^{2s+2t+1}.
 \end{align*}}%
 Continuing some more, we have
{\allowdisplaybreaks \begin{align*}
 LHS
& =- \frac{1}{2} +
2 \sum_{r,s\ge 1}q^{2rs}(-1)^{r+s}
 + \sum_{s,t\ge 1}q^{4st}(-1)^{s+t}+ \sum_{s,t\ge 1}q^{(2s-1)(2t-1)}(-1)^{s+t}\\
 & \qquad 
 + \sum_{r,s,t> 0}q^{2rs+2rt+4st}(-1)^{r+s+t}
   + \sum_{r,s,t> 0}q^{2rs-r+2rt-r+4st-2s-2t+1}(-1)^{r+s+t}\\
 & \qquad -i \sum_{r,s,t> 0}q^{2rs+2rt-r+4st-2s}(-1)^{r+s+t}
    + i\sum_{r,s,t> 0}q^{2rs-r+2rt+4st-2t}(-1)^{r+s+t}\\
  & \qquad 
 + \sum_{r,s,t< 0}q^{r2s+r2t+4st}(-1)^{r+s+t}
   + \sum_{r,s,t< 0}q^{2rs+r+2rt+r+4st+2s+2t+1}(-1)^{r+s+t}\\
 & \qquad +i \sum_{r,s,t< 0}q^{2rs+2rt+r+4st+2s}(-1)^{r+s+t}
    -i \sum_{r,s,t< 0}q^{2rs+r+2rt+4st+2t}(-1)^{r+s+t}\\
 & =- \frac{1}{2} +
2 \sum_{r,s\ge 1}q^{2rs}(-1)^{r+s}
 + \sum_{s,t\ge 1}q^{4st}(-1)^{s+t}+ \sum_{s,t\ge 1}q^{(2s-1)(2t-1)}(-1)^{s+t}\\
 & \qquad 
 + \left (\sum_{r,s,t> 0}+\sum_{r,s,t< 0} \right ) q^{2rs+2rt+4st}(-1)^{r+s+t}
  +2 \sum_{r,s,t> 0}q^{2rs+2rt+4st-2r-2s-2t+1}(-1)^{r+s+t}.
\end{align*}}%
A minor rewrite yields
{\allowdisplaybreaks \begin{align}
 LHS
&=- \frac{1}{2} +
2 \sum_{r,s\ge 1}q^{2rs}(-1)^{r+s}
 + \sum_{s,t\ge 1}q^{4st}(-1)^{s+t}+ \sum_{s,t\ge 1}q^{(2s-1)(2t-1)}(-1)^{s+t}
   \label{equation:LHS-final}\\
 & \qquad 
 + 2\sum_{r,s,t\ge 1} q^{2rs+2rt+4st}(-1)^{r+s+t}
  +2 \sum_{r,s,t\ge 1}q^{r(2s-1)+r(2t-1)+(2s-1)(2t-1)}(-1)^{r+s+t}.
\notag
\end{align}}%

\subsection{Proof of Theorem \ref{theorem:result-DKM}}  Here we simply equation the final form of the right-hand side (\ref{equation:RHS-final}) with the final form of the left-hand side (\ref{equation:LHS-final}).

\section{Evaluating the Fourier coefficients}\label{section:FourierCoefficient}

We recall the divisor function
\begin{equation*}
\sigma_k(n)=\sum_{n\mathop{\raisebox{-2pt}{\vdots}}d}d^{k},
\end{equation*}
where $\sigma_{0}(n)$ counts the number of divisors of $n$. In the following propositions we consider the coefficients in the relation \eqref{relation_DKM} depending on the remainder of $n$ divided by $4$. 
\begin{proposition}\label{proposition:prop-4mPlus2} For $n=4m+2$, we have
\begin{align*}
a(n)=-4\sigma_0\left ( \frac{n}{2}\right )-4\sum_{\substack{r,s,t\ge 1\\(2s+r)(2t+r)=n+r^2}}1.
\end{align*}
\end{proposition}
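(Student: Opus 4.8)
The plan is to specialize the even case of Corollary~\ref{cor:result-DKM}, i.e.\ formula~\eqref{DKM_even}, to $n=4m+2$ and to handle its three sums one at a time; the congruence $n\equiv 2\pmod 4$ will force every alternating sign to equal $+1$ and will annihilate one of the sums entirely.

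First I would treat the binary sum $\sum_{2rs=n}(-1)^{r+s}$: from $2rs=4m+2$ we get $rs=2m+1$, an odd number, so $r$ and $s$ are both odd and $(-1)^{r+s}=1$. Hence this sum contributes $-4\sum_{rs=n/2}1=-4\sigma_0(n/2)$. Next, the sum $\sum_{4st=n}(-1)^{s+t}$ is empty, since $4\nmid 4m+2$, so it contributes $0$.

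For the ternary sum $\sum_{2rs+2rt+4st=n}(-1)^{r+s+t}$, I would first extract the parity: the constraint $2rs+2rt+4st=4m+2$ gives $r(s+t)+2st=2m+1$, odd, so $r(s+t)$ is odd, forcing $r$ odd and $s+t$ odd; therefore $r+s+t$ is even and $(-1)^{r+s+t}=1$. Then I would rewrite the index set using the elementary identity
\[
(2s+r)(2t+r)=4st+2rs+2rt+r^2=(2rs+2rt+4st)+r^2,
\]
which shows that, for $r,s,t\ge 1$, the condition $2rs+2rt+4st=n$ is equivalent to $(2s+r)(2t+r)=n+r^2$. So the ternary sum contributes $-4\sum_{(2s+r)(2t+r)=n+r^2}1$. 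Adding the three contributions yields the stated formula.

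The argument is short and mostly bookkeeping; the only points needing a little care are the parity analysis of the ternary sum (noting that $s+t$, and not merely $r$, is forced odd, which is what makes $r+s+t$ even), and the observation that completing the product $(2s+r)(2t+r)$ introduces exactly the correction term $r^2$, converting the ternary quadratic form into the divisor-type condition $(2s+r)(2t+r)=n+r^2$. I do not anticipate any serious obstacle.
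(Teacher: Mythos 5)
Your proposal is correct and follows essentially the same route as the paper's own proof: specialize \eqref{DKM_even}, note that the $4st=n$ sum vanishes, use the parity argument ($rs=2m+1$ odd forces $r+s$ even; $rs+rt+2st=2m+1$ forces $r(s+t)$ odd, hence $r+s+t$ even), and complete the product to $(2s+r)(2t+r)=n+r^2$. No issues.
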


\begin{proposition}\label{proposition:prop-4mPlus1} For $n=4m+1$, we have
\begin{align*}
a(n)=-2\sigma_0(n)-4\sum_{\substack{r,s,t\ge 1\\ (2s-1+r)(2t-1+r)=n+r^2}}1.
\end{align*}
\end{proposition}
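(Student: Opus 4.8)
The plan is to specialize the odd-index case of Corollary~\ref{cor:result-DKM} --- that is, \eqref{DKM_odd} --- to $n=4m+1$ and to evaluate its two sums separately. Abbreviate
\begin{align*}
A(n)&:=\sum_{\substack{s,t\ge 1\\(2s-1)(2t-1)=n}}(-1)^{s+t},\\
B(n)&:=\sum_{\substack{r,s,t\ge 1\\ 2r(s+t-1)+(2s-1)(2t-1)=n}}(-1)^{r+s+t},
\end{align*}
so that $a(n)=-2A(n)-4B(n)$. I will show that when $n\equiv 1\pmod 4$ one has $A(n)=\sigma_0(n)$ and $B(n)=\sum_{\substack{r,s,t\ge 1\\(2s-1+r)(2t-1+r)=n+r^2}}1$; substituting these into the displayed identity yields the proposition.

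\emph{Evaluation of $A(n)$.} Since $n$ is odd, every ordered factorization $n=d_1 d_2$ has $d_1,d_2$ odd, and $(d_1,d_2)\mapsto(2s-1,2t-1)=(d_1,d_2)$ is a bijection from the $\sigma_0(n)$ ordered factorizations onto the index set of $A(n)$, with $s,t\ge 1$ automatically. Writing $s+t=(d_1+d_2)/2+1$ gives $(-1)^{s+t}=-(-1)^{(d_1+d_2)/2}$; and $d_1 d_2=n\equiv 1\pmod 4$ forces $d_1\equiv d_2\pmod 4$, hence $d_1+d_2\equiv 2\pmod 4$, so that $(-1)^{s+t}=1$. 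Therefore $A(n)=\sigma_0(n)$.

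\emph{Evaluation of $B(n)$.} The identity
\begin{equation*}
(2s-1+r)(2t-1+r)=(2s-1)(2t-1)+2r(s+t-1)+r^2
\end{equation*}
shows that the constraint $2r(s+t-1)+(2s-1)(2t-1)=n$ is \emph{equivalent} to $(2s-1+r)(2t-1+r)=n+r^2$, so the two index sets coincide and it remains only to verify $(-1)^{r+s+t}=1$ throughout. Set $a=2s-1+r$ and $b=2t-1+r$; then $a\equiv b\pmod 2$, $a+b=2(r+s+t-1)$, and $ab=n+r^2$. If $a$ and $b$ were both even then $r$ would be odd, giving $ab=n+r^2\equiv 1+1\equiv 2\pmod 4$, contradicting $4\mid ab$; hence $a,b$ are both odd, $r$ is even, $r^2\equiv 0\pmod 4$, and $ab\equiv 1\pmod 4$. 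Then $a\equiv b\pmod 4$, so $a+b\equiv 2\pmod 4$, i.e.\ $r+s+t-1$ is odd and $(-1)^{r+s+t}=1$. This gives $B(n)=\sum_{\substack{r,s,t\ge 1\\(2s-1+r)(2t-1+r)=n+r^2}}1$ and completes the proof.

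The main obstacle is the sign bookkeeping in $B(n)$: one must track the parities of $r$, $a=2s-1+r$, and $b=2t-1+r$ simultaneously and invoke $n\equiv 1\pmod 4$ precisely to exclude the case ``$a,b$ both even.'' This is exactly where the hypothesis $n\equiv 1$ (rather than $n\equiv 3$) $\pmod 4$ is essential, so the parallel statement for $n\equiv 3\pmod 4$ will require a genuinely different sign count. The completing-the-product identity used for $B(n)$ and the factorization count used for $A(n)$ are otherwise routine.
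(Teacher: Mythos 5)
Your proposal is correct and follows essentially the same route as the paper: both specialize \eqref{DKM_odd}, show every sign in each sum is $+1$ via a mod-$4$ parity count (your argument runs through the residues of $a=2s-1+r$ and $b=2t-1+r$, the paper's through $4st+2(r-1)(s+t-1)=n+1$, but these are the same bookkeeping), and rewrite the triple-sum condition by completing the product. No substantive difference.
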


\begin{proposition}\label{proposition:prop-4mPlus3} For $n=8m+3$, we have
\begin{align*}
a(n)=2\sigma_0(n)+4\sum_{\substack{r,s,t\ge 1\\ (2s-1+r)(2t-1+r)=n+r^2}}1.
\end{align*}
\end{proposition}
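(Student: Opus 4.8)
The plan is to specialize the odd case of Corollary~\ref{cor:result-DKM}, i.e.\ identity~\eqref{DKM_odd}, to $n=8m+3$, just as in the proof of Proposition~\ref{proposition:prop-4mPlus1}. First one records the algebraic identity
\[
(2s-1+r)(2t-1+r)=(2s-1)(2t-1)+2r(s+t-1)+r^2,
\]
so that the constraint $2r(s+t-1)+(2s-1)(2t-1)=n$ in \eqref{DKM_odd} is literally the constraint $(2s-1+r)(2t-1+r)=n+r^2$ appearing in the statement; no re-indexing is needed, and the whole proposition reduces to showing that, for $n\equiv 3\pmod 8$, every summand $(-1)^{s+t}$ in the first sum of \eqref{DKM_odd} equals $-1$ and every summand $(-1)^{r+s+t}$ in the second sum equals $-1$.

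For the first sum: since $n$ is odd, the solutions of $(2s-1)(2t-1)=n$ correspond bijectively to the ordered pairs of divisors of $n$, so there are $\sigma_0(n)$ of them. Because $n\equiv 3\pmod 4$, in each factorization exactly one odd factor is $\equiv 1\pmod 4$ and the other is $\equiv 3\pmod 4$; since $2s-1\equiv 1\pmod 4$ forces $s$ odd and $2s-1\equiv 3\pmod 4$ forces $s$ even, the two factors contribute $(-1)^s=-1$ and $(-1)^t=+1$ in some order, whence $(-1)^{s+t}=-1$. Thus the first sum of \eqref{DKM_odd} contributes $-2\cdot(-1)\cdot\sigma_0(n)=2\sigma_0(n)$.

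For the second sum, I would set $a:=2s-1+r$ and $b:=2t-1+r$; then $a+b=2(r+s+t-1)$, so $(-1)^{r+s+t}=-1$ is equivalent to $a+b\equiv 0\pmod 4$, while the constraint becomes $ab=n+r^2$ with $a\equiv b\pmod 2$. Now split on the parity of $r$. If $r$ is odd then $a,b$ are both even and $ab=n+r^2\equiv 3+1\equiv 4\pmod 8$, which forces $a/2$ and $b/2$ to both be odd, hence $a+b\equiv 0\pmod 4$. If $r$ is even then $a,b$ are both odd and $ab\equiv n\equiv 3\pmod 4$, which forces one of $a,b$ to be $\equiv 1$ and the other $\equiv 3\pmod 4$, again giving $a+b\equiv 0\pmod 4$. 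Hence $(-1)^{r+s+t}=-1$ throughout, and the second sum of \eqref{DKM_odd} contributes $-4\cdot(-1)\cdot\sum 1$; adding the two contributions yields the claimed formula.

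The step requiring the most care is the $r$ odd branch of the second sum, as this is the only place where the hypothesis $n\equiv 3\pmod 8$ is genuinely used rather than merely $n\equiv 3\pmod 4$: one needs $r^2\equiv 1\pmod 8$ together with $n\equiv 3\pmod 8$ to pin down $ab\equiv 4\pmod 8$ and thereby the $2$-adic valuations of $a$ and $b$. It is worth sanity-checking the computation against $n\equiv 7\pmod 8$, where that branch instead gives $ab\equiv 0\pmod 8$ and the sign cancellation argument correctly fails, consistently with Theorem~\ref{theorem:newAC-identity}.
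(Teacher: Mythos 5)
Your proof is correct and follows essentially the same route as the paper's: both specialize \eqref{DKM_odd} to $n\equiv 3\pmod 8$ and show that every summand in each of the two sums equals $-1$ by elementary congruence arguments. The only cosmetic difference is that you organize the parity casework around $r$ and the factors $a=2s-1+r$, $b=2t-1+r$ of $n+r^2$, whereas the paper works directly with $4st+2(r-1)(s+t-1)=n+1$ and splits on the parity of $s+t-1$; both arguments are valid and reach the same conclusion.
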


\begin{proposition}\label{proposition:prop-8mPlus7}  For $n=8m+7$, we have
\begin{align*}
a(n)=2\sigma_0(n)-4\sum_{\substack{r,s,t\ge 1\\(2s-1+r)(2t-1+r)=n+r^2}}(-1)^{r+s+t} = 0.
\end{align*}
\end{proposition}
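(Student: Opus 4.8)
The plan is to prove the two equalities separately: the formula $a(n)=2\sigma_0(n)-4\sum_{(r+2s-1)(r+2t-1)=n+r^2}(-1)^{r+s+t}$ by the same manipulation of Corollary \ref{cor:result-DKM} already used in Propositions \ref{proposition:prop-4mPlus1} and \ref{proposition:prop-4mPlus3}, and the vanishing $a(n)=0$ from the product form of the generating function together with a congruence obstruction.

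For the formula I would start from the odd case of Corollary \ref{cor:result-DKM},
$$a(n)=-2\sum_{\substack{s,t\ge 1\\(2s-1)(2t-1)=n}}(-1)^{s+t}-4\sum_{\substack{r,s,t\ge 1\\ 2r(s+t-1)+(2s-1)(2t-1)=n}}(-1)^{r+s+t}.$$
When $n\equiv 7\pmod 8$, in any factorization $n=(2s-1)(2t-1)$ the two odd factors reduce to $\{1,7\}$ or to $\{3,5\}$ modulo $8$, so their sum $2(s+t-1)$ is $\equiv 0\pmod 8$; hence $s+t\equiv 1\pmod 4$ and $(-1)^{s+t}=-1$ on every term, which turns the first sum into $-2\cdot(-\sigma_0(n))=2\sigma_0(n)$. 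In the triple sum I complete the square, $2r(s+t-1)+(2s-1)(2t-1)=(r+2s-1)(r+2t-1)-r^2$, so the summation condition becomes $(r+2s-1)(r+2t-1)=n+r^2$. This gives the first equality.

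For the vanishing I would rewrite the generating function as a product of theta functions. From $(-q;q)_\infty=1/(q;q^2)_\infty$ and $(q;q)_\infty=(q;q^2)_\infty(q^2;q^2)_\infty$ together with Jacobi's triple product (\ref{equation:JTPid}) one obtains $(q;q)_\infty/(-q;q)_\infty=j(q;q^2)=\sum_{m\in\mathbb{Z}}(-1)^mq^{m^2}$ and likewise $(q^2;q^2)_\infty/(-q^2;q^2)_\infty=j(q^2;q^4)=\sum_{m\in\mathbb{Z}}(-1)^mq^{2m^2}$, whence
$$\sum_{n\ge 0}a(n)q^n=\Big(\sum_{x\in\mathbb{Z}}(-1)^xq^{x^2}\Big)\Big(\sum_{y\in\mathbb{Z}}(-1)^yq^{2y^2}\Big)\Big(\sum_{z\in\mathbb{Z}}(-1)^zq^{2z^2}\Big),$$
and therefore $a(n)=\sum_{x^2+2y^2+2z^2=n}(-1)^{x+y+z}$, the sum over $x,y,z\in\mathbb{Z}$. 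Finally $x^2\in\{0,1,4\}$ and $2y^2,2z^2\in\{0,2\}$ modulo $8$, so $x^2+2y^2+2z^2$ never lies in the residue class $7$ modulo $8$; the representation set is empty and $a(n)=0$.

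The only genuine obstacle arises if one insists on a proof entirely inside the combinatorial framework of Section \ref{section:FourierCoefficient}, i.e. on showing directly that $4\sum_{(r+2s-1)(r+2t-1)=n+r^2}(-1)^{r+s+t}=2\sigma_0(n)$. Writing $a=r+2s-1$, $b=r+2t-1$ one has $(-1)^{r+s+t}=-(-1)^{(a+b)/2}$ with $ab=n+r^2$ and $a,b\ge r+1$ both of the parity of $r+1$. The terms with $r$ even are harmless, since then $ab\equiv 3\pmod 4$ forces $(a+b)/2$ even and each such term contributes $-1$; but the $r$-odd terms have non-constant sign, depending on the $2$-adic valuations of $a$ and $b$, so one would have to pair them off, and that pairing is essentially a Liouville-type count for a variant of $xy+yz+zx=n$ of the kind this paper develops. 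The product-form argument above avoids this entirely, which is why I would use it.
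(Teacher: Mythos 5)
Your proposal is correct, and the first equality is obtained exactly as in the paper: the authors also start from \eqref{DKM_odd}, observe that every term of the binary sum is $-1$ (their parity argument via $4st-2(s+t)=n-1$ is equivalent to your mod-$8$ factor analysis), and complete the square to rewrite the triple-sum condition as $(2s-1+r)(2t-1+r)=n+r^2$. Where you genuinely diverge is the vanishing. The paper does \emph{not} prove $a(n)=0$ here; it defers it to Case 4 of Theorem \ref{theorem:theorem1-nPlusr2}, where the identity $\sum_{(2s-1+r)(2t-1+r)=n+r^2}(-1)^{r+s+t}=\tfrac{1}{2}\sigma_0(n)$ is established by exactly the Liouville-type pairing you anticipated: solution triples are sorted by the $2$-adic behaviour of $2s-1+r$ and $2t-1+r$, matched with reduced binary quadratic forms of discriminants $-4n$ and $-n$, and the signs are balanced using $H(4n)=2H(n)$. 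Your route instead writes $(q;q)_\infty/(-q;q)_\infty=j(q;q^2)=\sum_x(-1)^xq^{x^2}$ and its $q\mapsto q^2$ analogue, so that $a(n)=\sum_{x^2+2y^2+2z^2=n}(-1)^{x+y+z}$, and then kills the class $n\equiv 7\pmod 8$ by the elementary observation that $x^2+2y^2+2z^2$ only takes residues $0,\dots,6$ modulo $8$; this is short, self-contained, and entirely within the paper's toolkit (the product evaluation is the identity $J_{1,2}=J_1^2/J_2$ plus Jacobi's triple product, both recorded in Section \ref{section:prelim-theta}). The trade-off is that the paper's heavier argument yields the standalone divisor-sum identity as a structural fact needed for Theorem \ref{theorem:newAC-identity}, whereas your argument obtains it only as a by-product (by comparing your two equalities); as a proof of the Proposition as stated, however, yours is complete and arguably cleaner.
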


\begin{proposition}\label{proposition:prop-4m}  For $n=4m$, we have
\begin{align*}
a(n)=r_3(n)=r_3(n/4).
\end{align*}
\end{proposition}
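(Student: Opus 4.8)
The plan is to abandon the strategy used for the previous propositions --- which reads the Fourier coefficients off the left-hand side of \eqref{relation_DKM} --- and instead work directly with the product on the right-hand side of Theorem \ref{theorem:result-DKM}. Recall from the introduction that $(q;q)_{\infty}/(-q;q)_{\infty}=\sum_{m\in\mathbb{Z}}(-1)^mq^{m^2}$. Applying this identity once as stated and once with $q$ replaced by $q^2$ gives
\begin{equation*}
\sum_{n\ge 0}a(n)q^n=\Big(\sum_{x\in\mathbb{Z}}(-1)^xq^{x^2}\Big)\Big(\sum_{y\in\mathbb{Z}}(-1)^yq^{2y^2}\Big)\Big(\sum_{z\in\mathbb{Z}}(-1)^zq^{2z^2}\Big)=\sum_{x,y,z\in\mathbb{Z}}(-1)^{x+y+z}q^{x^2+2y^2+2z^2},
\end{equation*}
so that $a(n)=\sum(-1)^{x+y+z}$, the sum running over all $(x,y,z)\in\mathbb{Z}^3$ with $x^2+2y^2+2z^2=n$.

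Next I would fix $n=4m$ and show every such triple contributes $+1$. Since $2y^2+2z^2$ is even, $x^2=n-2y^2-2z^2$ is even, so $x$ is even and $x^2\equiv 0\pmod 4$; hence $2(y^2+z^2)\equiv 0\pmod 4$, so $y^2+z^2$ is even and $y\equiv z\pmod 2$. Consequently $x+y+z$ is even and $(-1)^{x+y+z}=1$. Therefore $a(4m)$ equals the total number of representations of $4m$ in the form $x^2+2y^2+2z^2$, i.e. $a(4m)=|a(4m)|$, consistent with the Remark.

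I would then convert this count into $r_3(n)$ via the substitution noted in the introduction. The map $(x,y,z)\mapsto(x,y-z,y+z)$ is a bijection from $\{(x,y,z)\in\mathbb{Z}^3:x^2+2y^2+2z^2=n\}$ onto $\{(x,u,v)\in\mathbb{Z}^3:x^2+u^2+v^2=n,\ u\equiv v\pmod 2\}$, with inverse $(x,u,v)\mapsto(x,(u+v)/2,(v-u)/2)$, which is integral exactly because $u\equiv v\pmod 2$. For $n=4m$ a square is $0$ or $1$ modulo $4$, and three of them sum to $0$ modulo $4$ only when all are even; hence every representation $x^2+u^2+v^2=4m$ automatically has $u\equiv v\pmod 2$ (indeed $x,u,v$ are all even), so the bijection identifies $a(4m)$ with the full count $r_3(4m)$. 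Finally, writing $x=2x'$, $u=2u'$, $v=2v'$ yields a bijection $(x,u,v)\mapsto(x',u',v')$ between representations of $4m$ and of $n/4=m$ as sums of three squares, giving $r_3(4m)=r_3(n/4)$ and completing the proof.

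There is no serious obstacle: once one returns to the product side the argument is entirely elementary, and the only points needing care are (i) the parity check showing $(-1)^{x+y+z}$ is uniformly $+1$ on the solution set when $4\mid n$, and (ii) verifying that $(x,y,z)\leftrightarrow(x,u,v)$ is a genuine bijection, where the congruence $u\equiv v\pmod 2$ is precisely what makes the inverse integral. The conceptual point worth emphasizing is that, unlike the other residue classes, the $n\equiv 0\pmod 4$ case requires no divisor-sum manipulation of \eqref{DKM_even} at all --- the vanishing of the sign renders the identity with $r_3(n)$ transparent.
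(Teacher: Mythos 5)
Your proof is correct, and the two halves concerning the bijections coincide exactly with the paper's: the paper uses the same correspondence $(a,b,c)\leftrightarrow\bigl(a,\tfrac{b+c}{2},\tfrac{b-c}{2}\bigr)$ between representations $n=a^2+b^2+c^2$ and $n=p^2+2q^2+2r^2$ (valid because all entries are even when $4\mid n$), and the same divide-by-two argument for $r_3(n)=r_3(n/4)$. Where you genuinely diverge is in how the sign is handled. The paper first invokes the fact that $|a(n)|$ counts representations of $n=x^2+2y^2+2z^2$ (the Remark following Theorem \ref{theorem:newAC-identity}), and then separately proves $a(4m)\ge 0$ by a $q$-series dissection: writing $J_{1,2}=\overline{J}_{4,8}-q\overline{J}_{0,8}$ and $J_{2,4}=\overline{J}_{8,16}-q^2\overline{J}_{0,16}$ via \eqref{equation:j-split}, expanding the product, and observing that only terms with non-negative coefficients contribute to exponents divisible by $4$. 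You instead expand the product side directly as $\sum_{x,y,z}(-1)^{x+y+z}q^{x^2+2y^2+2z^2}$ using $(q;q)_{\infty}/(-q;q)_{\infty}=\sum_m(-1)^mq^{m^2}$ and verify pointwise that the sign is $+1$ on every solution of $x^2+2y^2+2z^2=4m$ (since $x$ is forced even and $y\equiv z\pmod 2$). These are two phrasings of the same underlying parity fact --- the paper's dissection of $J_{1,2}$ is precisely the even/odd split of $\sum(-1)^mq^{m^2}$ --- but your version is more elementary and self-contained: it bypasses the theta-function machinery, does not rely on the unproved Remark, and delivers $a(4m)$ as an honest nonnegative count in one step rather than via $|a(n)|$ plus a separate non-negativity lemma. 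Both arguments are complete; yours is arguably the cleaner route for this particular residue class.
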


\begin{proof}[Proof of Proposition \ref{proposition:prop-4mPlus2}]
Equation \eqref{DKM_even} for $n = 4m + 2$ gives
\begin{align*}
a(n) = -4\sum_{\substack{r,s\ge1\\2rs=n}}(-1)^{r+s}-4\sum_{\substack{r,s,t\ge1\\2rs+2rt+4st=n}}(-1)^{r+s+t}.
\end{align*}
If $2rs = n = 4m + 2$ then $rs = 2m + 1$ and both $r$ and $s$ are odd. Hence $r + s$ is even and every term in the first sum equals $1$. Similarly, if $2rs + 2rt + 4st = n = 4m + 2$ then $rs + rt + 2st = 2m + 1$. Hence $r(s + t)$ is odd, $r + s + t$ is even, and every term in the second sum also equals $1$. Therefore
\begin{gather*}
a(n)
=-4\sum_{\substack{r,s\ge 1 \\ rs=\frac{n}{2}}}1-4\sum_{\substack{r,s,t\ge 1\\2rs+2rt+4st=n}}1=-4\sigma_0\left ( \frac{n}{2}\right )-4\sum_{\substack{r,s,t\ge 1\\(2s+r)(2t+r)=n+r^2}}1.\qedhere
\end{gather*}
\end{proof}

\begin{proof}[Proof of Proposition \ref{proposition:prop-4mPlus1}]
Express $a(n)$ by equation \eqref{DKM_odd}. If $(2s - 1)(2t - 1) = n = 4m + 1$ then $4st - 2(s + t) = n - 1= 4m$ and $s + t$ is even. Hence every term in the first sum of  \eqref{DKM_odd} is $1$. The equality $2r(s+t-1)+(2s-1)(2t-1)=n$ rewrites as $4st + 2(r - 1)(s + t - 1) =n + 1 = 4m + 2$. Hence $(r - 1)(s + t- 1)$ is odd, $r$ and $s + t$ are even and every term in the second sum of  \eqref{DKM_odd} is also $1$. Therefore 
\begin{gather*}
    a(n) = -2\sum_{\substack{s,t\ge 1\\(2s-1)(2t-1)=n}}1-4\sum_{\substack{r,s,t\ge 1\\ 2r(s+t-1)+(2s-1)(2t-1)=n}}1  =-2\sigma_0(n)-4\sum_{\substack{r,s,t\ge 1\\ (2s-1+r)(2t-1+r)=n+r^2}}1.
\end{gather*}

\end{proof}

\begin{proof}[Proof of Proposition \ref{proposition:prop-4mPlus3}]
We proceed similarly to the proof of Proposition \ref{proposition:prop-4mPlus1}. If $(2s - 1)(2t - 1) = n = 8m + 3$ then $4st - 2(s + t) = 8m + 2$ and $s + t$ is odd. Hence every term in the first sum of  \eqref{DKM_odd} is $-1$. Second equation becomes $4st + 2(r - 1)(s + t - 1) =n + 1 = 8m + 4$. If $s + t - 1$ is odd then $r - 1$ has to be even and $r + s + t$ is odd. If $s + t - 1$ is even then $st$ is even and $2(r - 1)(s + t - 1)\equiv 4\pmod 8$. Hence $r$ is even and $r +s + t$ is again odd. Thus $r + s + t$ is always odd and every term in the second sum is $-1$. Therefore
\begin{gather*}
    a(n) = 2\sum_{\substack{s,t\ge 1\\(2s-1)(2t-1)=n}}1+4\sum_{\substack{r,s,t\ge 1\\ 2r(s+t-1)+(2s-1)(2t-1)=n}}1  =2\sigma_0(n)+4\sum_{\substack{r,s,t\ge 1\\ (2s-1+r)(2t-1+r)=n+r^2}}1.\qedhere
\end{gather*}

\end{proof}

\begin{proof}[Proof of Proposition \ref{proposition:prop-8mPlus7}]
Similarly to the proof of Proposition \ref{proposition:prop-4mPlus3} we use relation \eqref{DKM_odd} and notice that each term in the first sum is $-1$. We get
\begin{gather*}
a(n)=2\sigma_0(n)-4\sum_{\substack{r,s,t\ge 1\\(2s-1+r)(2t-1+r)=n+r^2}}(-1)^{r+s+t}.
\end{gather*}
It can be shown that $a(n)=0$ in this case (see Case 4 of the proof of Theorem \ref{theorem:theorem1-nPlusr2}).
\end{proof}

 \begin{proof}[Proof of Proposition \ref{proposition:prop-4m}] If $n$ is divisible by $4$ and $n$ is represented as a sum of three squares $n = a^2 + b^2 + c^2$ then $a,b$ and $c$ must be even. To each representation of $n$ as a sum of three squares we can match a representation of $n/4$ by simply dividing all elements by $4$; we get $\frac{n}{4} = (\frac{a}{2})^2+ (\frac{b}{2})^2+(\frac{c}{2})^2$. Hence $r_3(n) 
=r_3(n/4)$. To show $r_3(n) = |a(n)|$ let us construct another bijection. For a triple $(a,b,c)$ satisfying $n = a^2 + b^2 + c^2$ we can match the triple $(a, (b+c)/2, (b - c)/2)$ satisfying $n = a^2 + 2\left(\frac{b + c}{2}\right)^2 + 2\left(\frac{b - c}{2}\right)^2$. On the other hand for a triple $(p,q,r)$ such that $n = p^2 + 2q^2 + 2r^2$ we can always match the triple $(p, q + r, q - r)$ satisfying $n = p^2 + (q + r)^2 + (q - r)^2$. These mappings are inverse to each other, hence $r_3(n) = |a(n)|$. 

Let us show that $a(n)$ is non-negative for $n=4m$. By Theorem \ref{theorem:result}, using the notation introduced in Section \ref{section:prelim-theta}, we obtain
\begin{align*}
    {J}_{1,2} {J}^2_{2,4}= j(q; q^2)j(q^2; q^4)^2   = \left ( \frac{(q;q)_{\infty}}{(-q;q)_{\infty}}\right ) 
\left( \frac{(q^2;q^2)_{\infty}}{(-q^2;q^2)_{\infty}}\right )^2=\sum_{n=0}^{\infty}a(n)q^{n}. 
\end{align*}
Notice that relation \eqref{equation:j-split} implies
\begin{align} \label{identityj}
  j(z; q) = j(-qz^2; q^4) -z j(-q^3z^2; q^4).
\end{align}
Making the substitutions $z\mapsto q, q\mapsto q^2$ in \eqref{identityj}, we get
\begin{align*}
    {J}_{1,2} = \overline{J}_{4,8} -q\overline{J}_{0,8}.
\end{align*}
Similarly,
\begin{align*}
  {J}_{2,4} = \overline{J}_{8,16} -q^2\overline{J}_{0,16}.
\end{align*}
Denote $A:=  \overline{J}_{4,8}, \ B:=\overline{J}_{0,8}, 
\ C:=\overline{J}_{8,16}, \ D:=\overline{J}_{0,16}$. We have 
\begin{align*}
   {J}_{1,2} {J}^2_{2,4}= (A-qB)(C-q^2D)^2 = AC^2-qBC^2-2ACDq^2+2BCDq^3+AD^2q^4-AD^2q^5. 
\end{align*}
We see that the coefficients for terms of the form $q^{4m}$ are non-negative, thus $a(n)$ is non-negative. Hence $a(n) = r_3(n)$, the proof is concluded.  
\end{proof}

\section{Proof of Theorem \ref{theorem:newAC-identity}}\label{section:newAC-FC}

Theorem \ref{theorem:newAC-identity} is an immediate consequence of the propositions of Section \ref{section:FourierCoefficient} and the following theorem:

\begin{theorem} \label{theorem:theorem1-nPlusr2} We have
{\allowdisplaybreaks \begin{gather}
\sum_{\substack{r,s,t\ge 1\\(2s+r)(2t+r)=n+r^2}}1
=H(4n)-\sigma_0\left ( \frac{n}{2}\right ),\   \textup{for} \ n\equiv 2 \mod 4,
\\
\sum_{\substack{r,s,t\ge 1\\(2s-1+r)(2t-1+r)=n+r^2}}1
=H(4n)-\frac{1}{2}\sigma_0\left ( n\right ),\   \textup{for} \ n\equiv 1 \mod 4,
\label{equation:theorem1-n1mod4}\\
\sum_{\substack{r,s,t\ge 1\\(2s-1+r)(2t-1+r)=n+r^2}}1
=6H(n)-\frac{1}{2}\sigma_0\left ( n\right ),\   \textup{for} \ n\equiv 3 \mod 8,
\label{equation:theorem1-n3mod8}\\
 \sum_{\substack{r,s,t\ge 1\\(2s-1+r)(2t-1+r)=n+r^2}}(-1)^{r + s + t}
=\frac{1}{2}\sigma_0\left ( n\right ),\  \textup{for} \ n\equiv 7 \mod 8.
\end{gather}}%
\end{theorem}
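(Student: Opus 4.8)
The plan is to recognize the left-hand sums as counting ordered triples, reparametrize them into a count of solutions of the Hermite--Liouville--Mordell equation $xy+yz+xz=\text{something}$, and then relate that count to Hurwitz class numbers via the classical correspondence between binary quadratic forms of discriminant $-N$ and such triples. Concretely, in the case $n\equiv 2\pmod 4$, writing $u=2s+r$, $v=2t+r$ one has $uv=n+r^2$ with $u,v>r\ge 1$ and $u\equiv v\equiv r\pmod 2$; eliminating $r$ shows that $u+v$ and $uv-n$ have the same parity and, after setting $r$ back in, the equation $(2s+r)(2t+r)=n+r^2$ becomes, upon expansion, $4st+2r(s+t)=n$, i.e. $2st+r(s+t)=n/2$. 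In the odd-shifted cases one similarly gets, with $a=2s-1$, $b=2t-1$, the equation $(a+r)(b+r)=n+r^2$, i.e. $ab+r(a+b)=n$; since $a,b$ are odd and positive this is exactly an instance of $xy+yz+xz=n$ with the middle variable $r\ge 1$ and the outer variables odd $\ge 1$. So the first main step is to convert each of the four sums into a clean count of the form $\#\{(x,y,z):xy+yz+xz=N,\ \text{congruence/positivity conditions}\}$.

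The second step is to invoke the classical formula expressing the number of representations $xy+yz+xz=N$ (with $x,y,z\ge 1$) in terms of Hurwitz class numbers; this is the Hermite--Liouville--Mordell circle of results referenced in the introduction via \cite{Mord1}. The standard statement is that $\sum_{xy+yz+xz=N,\,x,y,z\ge 1}1$ equals (up to explicit elementary correction terms counting degenerate/divisor contributions) a constant times $H(4N)$ or $H(N)$, the constant depending on $N\bmod 8$; the weights $1/2$ and $1/3$ in Definition \ref{definition:Cohen-definition536} account for the forms $a(x^2+y^2)$ and $a(x^2+xy+y^2)$, which is precisely why the $n\equiv 3\pmod 8$ case produces the factor $6$ rather than $4$ and why a divisor-function term $\sigma_0$ appears (those come from the diagonal solutions $x=y=z$ and the ``boundary'' solutions where one variable is forced). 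I would carry out the four congruence cases in parallel: for $n\equiv 2\pmod 4$ and $n\equiv 1\pmod 4$ the answer is $H(4n)$ minus the appropriate $\sigma_0$ correction; for $n\equiv 3\pmod 8$ one uses Lemma \ref{lemma:mult-4}, equation \eqref{equation:delta-3mod8}, to rewrite $H(4n)=4H(n)$ and then absorbs the extra weighted forms to reach $6H(n)$; for $n\equiv 7\pmod 8$ the class-number contributions cancel against each other (reflecting $r_3(n)=0$), leaving only the $\sigma_0$ term, and here the alternating sign $(-1)^{r+s+t}$ is essential — one checks that in this residue class $r+s+t$ has fixed parity on the non-diagonal solutions so the alternating sum collapses to the divisor count, as already used in Proposition \ref{proposition:prop-8mPlus7}.

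The main obstacle will be the bookkeeping in the class-number correspondence: setting up the precise bijection between triples $(x,y,z)$ with $xy+yz+xz=N$ and reduced binary quadratic forms of discriminant $-4N$ (or $-N$), tracking which triples correspond to the specially-weighted forms, and isolating the elementary boundary terms that turn the raw form-count $H$ into the stated $H$-minus-$\sigma_0$ expressions. In the odd-shifted cases there is the further subtlety that the outer variables are constrained to be odd, so one must pass to a sublattice / impose a congruence on the forms, which changes the discriminant bookkeeping and is the reason the $n\equiv 1$ and $n\equiv 3$ cases behave differently even though both have $n$ odd. I would handle this by first proving a single clean lemma counting $\#\{(x,y,z)\ge 1: xy+yz+xz=N,\ x\equiv y\equiv z\pmod 2\ \text{or other fixed parity pattern}\}$ in terms of $H$, deferring the sign-twisted $n\equiv 7$ statement to a short separate parity argument, and then specializing $N$ and the parity pattern to obtain each of the four displayed identities.
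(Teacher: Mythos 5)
Your overall strategy --- reduce each sum to a parity-constrained count of solutions of $xy+yz+zx=N$ and match those against reduced binary quadratic forms of discriminant $-4n$ or $-n$ --- is the same strategy the paper uses, and your reparametrization in the first step is correct. But the core of your argument is an appeal to a ``classical formula'' for $\#\{x,y,z\ge 1: xy+yz+zx=N\}$ in terms of Hurwitz class numbers, and that formula, in the parity-restricted and sign-weighted versions you actually need, is essentially the content of the theorem itself; it is not available off the shelf from \cite{Mord1} (the paper explicitly notes its implementation differs from Mordell's, in particular because of the weight $(-1)^{r+s+t}$ and the oddness constraints on two of the three variables). The paper has to build the correspondence by hand: it partitions the solution triples into six categories according to the ordering of $r$, $2s-\chi$, $2t-\chi$, maps each category onto a distinct class of reduced forms via the identity $b^2-ac=(a-b)^2-a(a+c-2b)$, and identifies the $b=0$ forms as the source of the $\sigma_0$ corrections. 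Your ``single clean lemma'' is exactly this construction, and deferring it leaves the proof without its main engine.

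Two of your specific claims are also wrong in ways that matter. First, for $n\equiv 3\pmod 8$ the factor $6$ does \emph{not} come from the weights $\tfrac12$ and $\tfrac13$ in the definition of $H$: it arises because the even-$r$ triples biject onto forms of discriminant $-4n$ (giving $H(4n)-H(n)-\tfrac12\sigma_0(n)=3H(n)-\tfrac12\sigma_0(n)$ after \eqref{equation:delta-3mod8}), while the odd-$r$ triples cover the reduced forms of discriminant $-n$ exactly three-to-one, contributing another $3H(n)$; without this even/odd split and the three-fold covering you cannot get from $4H(n)$ to $6H(n)$. Second, for $n\equiv 7\pmod 8$ your assertion that $r+s+t$ has fixed parity on the non-diagonal solutions is false: the paper's Case 4 shows the sign is $-1$ on two of four categories of triples (those with $r$ even, and those with both $2s-1+r$ and $2t-1+r$ divisible by $4$) and $+1$ on the other two, and the identity $\sum(-1)^{r+s+t}=\tfrac12\sigma_0(n)$ only emerges after one proves that the three preimages of each reduced form under the three-to-one map land in three distinct categories, so that the category sizes are $H(n)-\tfrac12\sigma_0(n)$, $H(n)$, $H(n)$, $H(n)$ and the class-number contributions cancel. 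Proposition \ref{proposition:prop-8mPlus7} does not supply this; it only fixes the sign of the two-variable sum. As written, your proposal would not close either of these cases.
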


\begin{proof}[Proof of Theorem \ref{theorem:theorem1-nPlusr2}]

Notice that for any equivalence class there exists one and only one reduced representative. Therefore, we can prove the equations above by establishing mappings from the set of solutions of equations $(2s + r)(2t + r) = n + r^2$ and $(2s  - 1 + r)(2t - 1 + r) = n + r^2$ onto the set of reduced quadratic forms of discriminant $-4n$. 

The main trick which we are going to use to establish the mappings is
\begin{equation}
b^2 - ac = (a - b)^2 - a(a + c - 2b). 
\label{equation:trick}
\end{equation}
The mappings will be constructed slightly differently for different cases of the theorem. The general outline is to divide the set of solution triples into the following six categories and then work with each of them independently:
\begin{enumerate}
    \item $r \leq 2s - 1\cdot\chi(n \centernot\vdots 2) \leq 2t - 1\cdot\chi(n \centernot\vdots 2) $,
    \item $2t - 1\cdot\chi(n \centernot\vdots 2)  \leq r \leq 2s - 1\cdot \chi(n \centernot\vdots 2) $,
    \item $2s - 1\cdot \chi(n \centernot\vdots 2)  \leq 2t - 1\cdot \chi(n \centernot\vdots 2)  \leq r$,
    \item $2s - 1\cdot\chi(n \centernot\vdots 2)  < r < 2t - 1\cdot\chi(n \centernot\vdots 2) $,
    \item $2t - 1\cdot\chi(n \centernot\vdots 2)  < 2s - 1\cdot\chi(n \centernot\vdots 2)  < r$, 
    \item $r < 2t - 1\cdot\chi(n \centernot\vdots 2)  < 2s - 1\cdot\chi(n \centernot\vdots 2) $.
\end{enumerate}
Above, $\chi$ is the indicator function.

It is easy to see that each solution falls into at least one of the categories, and two of the categories can intersect iff there exists a solution triple with $r = 2s - 1\cdot\chi(n \centernot\vdots 2)  = 2t - 1\cdot\chi(n \centernot\vdots 2)$. If this is the case, then categories $(1)$, $(2)$, and $(3)$ all contain this triple but otherwise do not intersect. This, in turn, happens iff $n = 3k^2$ for some $k \in \mathbb Z_+$.

\medskip

\textbf{Case 1:} $n = 4m + 2.$ 

\medskip

A multiple of $x^2 + y^2$ always has a discriminant of the type $-4k^2$. A multiple of $x^2 + xy + y^2$ has a discriminant of the type $-3k^2$. The highest power of $2$ which divides a discriminant of one of these types is even. If $n = 4m + 2$, then $4n = 16m + 8$ which is divisible by $8$ and not by $16$. Therefore, no multiple of $x^2 + y^2$ or $x^2 + xy + y^2$ has discriminant $-4n$, and $H(4n)$ is simply the number of reduced quadratic forms of discriminant $-4n$. Also, note that if $(r, s, t)$ is a solution triple then $r$ is an odd number. Otherwise, since $n = (2s + r)(2t + r) - r^2$, $n$ is divisible by 4.

Let us divide the set of reduced quadratic forms $ax^2 + bxy + cy^2$ of discriminant $-4n$ into seven categories
\begin{enumerate}
    \item Reduced forms which have $b \ \vdots \ 2$, $b > 0$ but $b\centernot\vdots 4$, $a \centernot\vdots 2$, $c \centernot\vdots 2$,
    \item Reduced forms which have $b \ \vdots \ 4$, $b > 0$, $c\ \vdots \ 2$, $a \centernot\vdots 2$,
     \item Reduced forms which have $b \ \vdots \ 4$, $b > 0$, $a\ \vdots \ 2$, $c \centernot\vdots 2$,
     \item Reduced forms which have $b \ \vdots \ 4$, $b < 0$, $c\ \vdots \ 2$, $a \centernot\vdots 2$,
    \item Reduced forms which have $b \ \vdots \ 4$, $b < 0$, $a\ \vdots \ 2$, $c \centernot\vdots 2$,
     \item Reduced forms which have $b \ \vdots \ 2$, $b < 0$ but $b\centernot\vdots 4$, $a \centernot\vdots 2$, $c \centernot\vdots 2$,
      \vspace{0.15cm}
    \item Reduced forms which have $b = 0$.    
\end{enumerate}

First, we should justify this division. From the equation $b^2 - 4ac = -4n = -16m - 8$ it follows that $b \ \vdots \ 2$. In case if $b \ \vdots \ 2$ but $b \centernot\vdots 4$, the product $ac$ should be odd because otherwise $b^2 - 4ac$ is not divisible by $8$. If $b \ \vdots \ 4$, then $ac \ \vdots \ 2$ but $ac \centernot\vdots 4$ because otherwise $b^2 - 4ac$ is either not divisible by $8$ or divisible by $16$.

Recall the categorization of solution triples $(r, s, t)$. We build the mapping separately for each of the categories. We use the trick $\ref{equation:trick}$ and equation $(2s + r)(2t + r) = n + r^2$ to show that these quadratic forms have discriminant $-4n$:
\begin{enumerate}
    \item $a = 2s + r$, $b = 2r$, $c = 2t + r$; \\
    $b^2 - 4ac = 4r^2 - 4(2s + r)(2t + r) = -4n$,
    \item  $a = 2t + r$, $b  = 4t$, $c = 2t + 2s$; \\ $b^2 - (2a)(2c) = (2a - b)^2 - (2a)(2a + 2c - 2b) = 4r^2 - 4(2t + r)(2s + r) = -4n$, 
    \item $a = 2t + 2s$, $b = 4s$, $c = 2s + r$; \\
    $b^2 - (2a)(2c) = (2c - b)^2 - (2c)(2a + 2c - 2b) = 4r^2 - 4(2s + r)(2t + r) = -4n$,
      \item $a = 2s + r$, $b  = -4s$, $c = 2t + 2s$; \\ $b^2 - (-2a)(-2c) = (-2a - b)^2 - (-2a)(-2a - 2c - 2b) = 4r^2 - 4(2s + r)(2t + r) = -4n$, 
    
    \item  $a = 2t + 2s$, $b  = -4t$, $c = 2t + r$; \\ $b^2 - (-2a)(-2c) = (-2c - b)^2 - (-2c)(-2c - 2a - 2b) = 4r^2 - 4(2t + r)(2s + r) = -4n$,
     \item  $a = 2t + r$, $b = -2r$, $c = 2s + r$; \\
    $b^2 - 4ac = 4r^2 - 4(2t + r)(2s + r) = -4n$.

\end{enumerate}
Let us verify that these quadratic forms are indeed reduced. That is, in each case we need to check that $|b| \leq a \leq c$. In addition, if $|b| = a$ or $a = c$ then $b$ has to be non-negative.
\begin{enumerate}
    \item $2r \leq 2s + r \leq 2t + r$ is equivalent to $r \leq 2s \leq 2t$; since $b > 0$, the second condition is satisfied,
    \item $4t \leq 2t + r \leq 2t + 2s$ is equivalent to $2t \leq r \leq 2s$; since $b > 0$, the second condition is satisfied,
    \item $4s \leq 2t + 2s \leq 2s + r$ is equivalent to $2s \leq 2t \leq r$; since $b > 0$, the second condition is satisfied,
     \item $4s < 2s + r < 2t + 2s$ is equivalent to $2s < r < 2t$; since $|b| \neq a$, $a \neq c$, the second condition is satisfied,
    \item $4t < 2t + 2s < 2t + r$ is equivalent to $2t < 2s < r$; since $|b| \neq a$, $a \neq c$, the second condition is satisfied,
    \item $2r < 2t + r < 2s + r$ is equivalent to $r < 2t < 2s$; since $|b| \neq a$, $a \neq c$, the second condition is satisfied.
\end{enumerate}
Notice that the introduced transform maps a solution triple $(r, s, t)$ of category $(i)$ onto a reduced quadratic form of category $(i)$.
Every  quadratic form in category $(i), \ i < 7$ has a preimage and any two solution triples in the same category have distinct images under this transform, since, clearly, for each category there exists an inverse transform. Therefore, this mapping is a bijection onto the set of reduced quadratic forms of categories $(1) - (6)$. 

To prove the claim of the theorem, it only remains to show that the size of category $(7)$ equals $\sigma_0\left(\frac{n}{2}\right)$. Notice that any quadratic form with $b = 0$ corresponds to a pair of numbers $a$ and $c$ such that $a \leq c$ and $ac = n$. Since $n$ is not a square, the number of such pairs equals $\frac{1}{2}\sigma_0(n)$ which, since $n$ is divisible by 2 and not by 4, equals $\sigma_0\left(\frac{n}{2}\right)$.

\medskip

\textbf{Case 2:} $n = 4m + 1.$

\medskip

This case is very similar to Case 1. A multiple of $x^2 + xy + y^2$ cannot have the discriminant of $-4n$. If this was the case, then $n$ would have the form of $3k^2$, however $n \equiv 1 \pmod{4}$ and $3k^2 \not\equiv 1 \pmod{4}$. Also, a multiple of $x^2 + y^2$ has the discriminant of $-4n$ iff $n$ is a square number. By definition of Hurwitz class number, this quadratic form has weight $\frac{1}{2}$. Thus, $H(4n)$ equals the number of reduced quadratic forms of discriminant $-4n$ if $n$ is not a square and it is $\frac{1}{2}$ less than that otherwise. Notice that $r \ \vdots \ 2$ because otherwise LHS of equation $(2s - 1 + r)(2t - 1 + r) = n + r^2$ is divisible by $4$ and RHS is not.

Let us divide the set of reduced quadratic forms $ax^2 + bxy + cy^2$ of discriminant $-4n$ into seven categories
\begin{enumerate}
    \item Reduced forms which have $b \ \vdots \ 4$, $b > 0$, $a \centernot\vdots 2$, $c \centernot\vdots 2$,
    \item Reduced forms which have $b \ \vdots \ 2$ but $b \centernot\vdots 4$, $b > 0$, $a \centernot\vdots 2$, $c \ \vdots \ 2$,
    \item Reduced forms which have $b \ \vdots \ 2$ but $b \centernot\vdots 4$, $b > 0$, $a \ \vdots \ 2$, $c \centernot\vdots 2$,
     \item Reduced forms which have $b \ \vdots \ 4$, $b < 0$, $a \centernot\vdots  2$, $c \centernot\vdots 2$,
    \item Reduced forms which have $b \ \vdots \ 2$ but $b \centernot\vdots 4$, $b < 0$, $a \centernot\vdots 2$, $c \ \vdots \ 2$,
    \item Reduced forms which have $b \ \vdots \ 2$ but $b \centernot\vdots 4$, $b < 0$ $a \ \vdots \ 2$, $c \centernot\vdots 2$,
    \vspace{0.15cm}
    \item Reduced forms which have $b = 0$.
\end{enumerate}

First, we should justify this division. From equation $b^2 - 4ac = -4n = -16m - 4$ it follows that $b \ \vdots \ 2$. In case if $b \ \vdots \ 2$ but $b \centernot\vdots 4$, the product $ac$ should be even because otherwise $b^2 - 4ac$ is divisible by $8$. If both $a$ and $c$ are even then $b^2 - 4ac \equiv 4 \pmod{16}$, however $-16m - 4 \equiv 12 \pmod{16}$. If $b \ \vdots\ 4$ then $ac$ is odd because otherwise $b^2 - 4ac$ is divisible by $8$.

Just as in Сase $1$, we build the mapping separately for each of the categories of solution triples. We use the trick \ref{equation:trick} and equation $(2s - 1 + r)(2t - 1 + r) = n + r^2$ to show that these quadratic forms have discriminant $-4n$:
\begin{enumerate}
    \item $a = 2s - 1 + r$, $b = 2r$, $c = 2t - 1 + r$; \\
    $b^2 - 4ac = 4r^2 - 4(2s - 1 + r)(2t - 1 + r) = -4n$,
    \item $a = 2t - 1 + r$, $b  = 4t - 2$, $c = 2t + 2s - 2$; \\ $b^2 - (2a)(2c) = (2a - b)^2 - (2a)(2a + 2c - 2b) = 4r^2 - 4(2t - 1 + r)(2s - 1 + r) = -4n$,
    \item $a = 2t + 2s - 2$, $b = 4s - 2$, $c = 2s - 1 + r$; \\ $b^2 - (2a)(2c) = (2c - b)^2 - (2c)(2a + 2c - 2b) = 4r^2 - 4(2s - 1 + r)(2t - 1 + r) = -4n$,
    
    \item $a = 2s - 1 + r$, $b  = 2 - 4s$, $c = 2t + 2s - 2$; \\ $b^2 - (-2a)(-2c) = (-2a - b)^2 - (-2a)(-2a - 2c - 2b)  = 4r^2 - 4(2s - 1 + r)(2t - 1 + r) =-4n$, 
    \item $a = 2t + 2s - 2$, $b = 2 - 4t$, $c = 2t - 1 + r$; \\ $b^2 - (-2a)(-2c) = (-2c - b)^2 - (-2c)(-2a - 2c - 2b) = 4r^2 - 4(2t - 1 + r)(2s - 1 + r) = -4n$,
     \item $a = 2t - 1 + r$, $b = -2r$, $c = 2s - 1 + r$; \\
    $b^2 - 4ac = 4r^2 - 4(2t - 1 + r)(2s - 1 + r) = -4n$.
\end{enumerate}

Let us verify that these quadratic forms are indeed reduced. That is, in each case we need to check that $|b| \leq a \leq c$. In addition, if $|b| = a$ or $a = c$ then $b$ has to be non-negative.
\begin{enumerate}
    \item $2r \leq 2s - 1 + r \leq 2t - 1 + r$ is equivalent to $r \leq 2s - 1 \leq 2t - 1$;  since $b > 0$, the second condition is
satisfied,
 \item $4t - 2 \leq 2t - 1 + r \leq 2t + 2s - 2$ is equivalent to $2t - 1 \leq r \leq 2s - 1$;  since $b > 0$, the second condition is
satisfied, 
    \item $4s - 2 \leq 2t + 2s - 2 < 2s - 1 + r$ is equivalent to $2s - 1 \leq 2t - 1 \leq r$;  since $b > 0$, the second condition is
satisfied, 
   
        \item $4s - 2 < 2s - 1 + r < 2t + 2s - 2$ is equivalent to $2s - 1 < r < 2t - 1$; since $|b| \neq a$, $a \neq c$, the second condition is satisfied, 
    \item $4t - 2 < 2t + 2s - 2 < 2t - 1 + r$ is equivalent to $2t - 1 < 2s - 1 < r$; since $|b| \neq a$, $a \neq c$, the second condition is satisfied,
     \item $2r < 2t - 1 + r < 2s - 1 + r$ is equivalent to $r < 2t - 1 < 2s - 1$; since $|b| \neq a$, $a \neq c$, the second condition is satisfied.
\end{enumerate}

Notice that the introduced transform maps a solution triple $(r, s, t)$ of category $(i)$ onto a reduced quadratic form of category $(i)$.
Every  quadratic form in category $(i), \ i < 7$ has a preimage and any two solution triples in the same category have distinct images under this transform, since, clearly, for each category there exists an inverse transform. Therefore, this mapping is a bijection onto the set of reduced quadratic forms of categories $(1) - (6)$. 

To prove the claim of the theorem, it only remains to show that the size of category $(7)$ equals $\frac{1}{2}\sigma_0(n) + \frac{1}{2} \cdot \chi(n \ \text{is a square})$. Notice that any quadratic form with $b = 0$ corresponds to a pair of numbers $a$ and $c$ such that $a \leq c$ and $ac = n$. It is easy to see that the number of such pairs equals $\frac{1}{2}\sigma_0(n)$ if $n$ is not a square and $\frac{1}{2}(\sigma_0(n) + 1)$ otherwise.

\medskip

\textbf{Case 3:} $n = 8m + 3$.

\medskip

 A multiple of $x^2 + y^2$ cannot have the discriminant of $-n$. If this was the case, then $n$ would have had the form of $k^2$; however $n \equiv 3 \pmod{4}$ and $k^2 \not\equiv 3 \pmod{4}$. Also, a multiple of $x^2 + xy + y^2$ has the discriminant of $-n$ iff $n$ is divisible by $3$ and $\frac{n}{3}$ is a square number. By definition of Hurwitz class number, this quadratic form has weight $\frac{1}{3}$. Thus, $H(n)$ equals the number of reduced quadratic forms of discriminant $-n$ if $\frac{n}{3}$ is not a square and it is $\frac{2}{3}$ less than that otherwise. Similarly, $H(4n)$ equals the number of reduced quadratic forms of discriminant $-4n$ if $\frac{n}{3}$ is not a square and it is $\frac{2}{3}$ less than that otherwise.

If $n = 8m + 3$, $r$ could be even and odd. Let us consider these cases separately.

\medskip

\textbf{Case 3a:} $r$ is an even number.

\medskip
Let us divide the set of reduced quadratic forms of discriminant $-4n$ into categories:
\begin{enumerate}
    \item Reduced forms which have $b \ \vdots \ 4$, $b > 0$, $a \centernot\vdots 2$, $c \centernot\vdots 2$,
    \item Reduced forms which have $b \ \vdots \ 2$ but $b \centernot\vdots 4$, $b > 0$, $a \centernot\vdots 2$, $c \ \vdots \ 4$,
    \item Reduced forms which have $b \ \vdots \ 2$ but $b \centernot\vdots 4$, $b > 0$, $a \ \vdots \ 4$, $c \centernot\vdots 2$, 
    \item Reduced forms which have $b \ \vdots \ 2$ but $b \centernot\vdots 4$, $b < 0$, $a \centernot\vdots 2$, $c \ \vdots \ 4$, 
     \item Reduced forms which have $b \ \vdots \ 2$ but $b \centernot\vdots 4$, $b < 0$, $a \ \vdots \ 4$, $c \centernot\vdots 2$, 
    \item Reduced forms which have $b \ \vdots \ 4$, $b < 0$, $a \centernot\vdots 2$, $c \centernot\vdots 2$,
    \item Reduced forms which have $b \ \vdots \ 2$, but $b \centernot\vdots 4$, $a \ \vdots \ 2$, $c \ \vdots \ 2$,
   \vspace{0.15cm}
     \item Reduced forms which have $b = 0$.
\end{enumerate}
First, we should justify this division. From the equation $b^2 - 4ac = -4n = -32m - 12$ it follows that $b \ \vdots \ 2$. In case if $b \ \vdots \ 4$ the product $ac$ should be odd because otherwise $b^2 - 4ac$ is divisible by $8$. If $b \ \vdots \ 2$ but $b \centernot\vdots 4$, $ac$ is divisible by $4$. This is because since $\left(\frac{b}{2}\right)^2 \equiv 1 \pmod{4}$ and $-8m - 3 \equiv 1 \pmod{4}$, it follows that $ac \equiv 0 \pmod{4}$. Categories $(2) - (5)$ and $(7)$ thus describe all possibilities for $a$ and $c$ if $b \centernot\vdots 4$.

Consider the same mapping as in Case $2$. It is easy to see that a solution triple of category $(i)$ is mapped onto a reduced quadratic form of category $(i)$. Since $r$ is even, there is no solution triple with $r = 2s - 1 = 2t - 1$, and therefore the categories of solution triples do not intersect. For the same reasons as in Case $2$, this mapping is a bijection between the set of solution triples $(r, s, t)$ and the set of reduced quadratic forms of categories $(1) - (6)$.

Let us find the size of categories $(7)$ and $(8)$. If $ax^2 + bxy + cx^2$ is a reduced quadratic form in category $(7)$, then $\frac{a}{2}x^2 + \frac{b}{2}xy + \frac{c}{2}y^2$  is a reduced quadratic form of discriminant $-n$. On the other hand, since $n = 8m + 3$ any reduced quadratic form of discriminant $-n$ has odd second coefficient. Therefore, if we multiply any quadratic form of discriminant $-n$ by 2, we get a quadratic form from category $(7)$, which means that the size of category $(7)$ equals the number of reduced quadratic forms of discriminant $-n$, which is $H(n) - \frac{2}{3}\cdot[\frac{n}{3} \ \text{is a square number}]$. Notice that any quadratic form with $b = 0$ corresponds to a pair of numbers $a$ and $c$ such that $a \leq c$ and $ac = n$.  Since $n$ is not a square, the number of such pairs equals $\frac{1}{2}\sigma_0(n)$.

Observe that the number of reduced quadratic forms of discriminant $-4n$ is $H(4n) - \frac{2}{3}\cdot[\frac{n}{3} \ \text{is a square number}]$. Therefore, the number of solution triples $(r, s, t)$ with even $r$ equals $H(4n) - \frac{2}{3}\cdot[\frac{n}{3} \ \text{is a square number}]$ minus the sizes of categories $(7)$ and $(8)$. Recall identity \ref{equation:delta-3mod8} to simplify the formula:

\begin{equation*}
H(4n) - H(n) - \frac{1}{2}\sigma_0(n) = 3H(n) - \frac{1}{2}\sigma_0(n).
\end{equation*}

\medskip

\textbf{Case 3b:} $r$ is an odd number.

To conclude the proof, it only remains to show that the number of solution triples with odd $r$ equals $3H(n)$. As usual, we divide all solution triples $(r, s, t)$ except for $r = 2s - 1 = 2t - 1$ (if it exists) into six categories. For each of the categories we define an injection onto the set of reduced quadratic forms of discriminant $-n$:
\begin{enumerate}
    \item $a = s + \frac{r - 1}{2}$, $b = r$, $c = t + \frac{r - 1}{2}$; \\
    $b^2 - 4ac = r^2 - (2s - 1 + r)(2t - 1 + r) = -n$,
      \item $a =  t + \frac{r - 1}{2}$, $b  = 2t - 1$, $c = s + t - 1$; \\ $b^2 - (2a)(2c) = (2c - b)^2 - (2c)(2a + 2c - 2b) = r^2 - (2s - 1 + r)(2t - 1 + r) = -n$, 
    \item $a = t + s - 1$, $b = 2s - 1$, $c = s + \frac{r - 1}{2}$; \\
    $b^2 - (2a)(2c) = (2a - b)^2 - (2a)(2a + 2c - 2b) = r^2 - (2s - 1 + r)(2t - 1 + r) = -n$, 
      \item $a =  s + \frac{r - 1}{2}$, $b  = 1 - 2s$, $c = s + t - 1$ only if $r, s, t$ are distinct; \\ $b^2 - (2a)(2c) = (2a - b)^2 - (2a)(2a + 2c - 2b) = r^2 - (2s - 1 + r)(2t - 1 + r) = -n$, 
    \item $a = t + s - 1$, $b = 1 - 2t$, $c = s + \frac{r - 1}{2}$ only if $r, s, t$ are distinct; \\
    $b^2 - (2a)(2c) = (2c - b)^2 - (2c)(2a + 2c - 2b) = r^2 - (2s - 1 + r)(2t - 1 + r) = -n$,
    \item $a = t + \frac{r - 1}{2}$, $b = -r$, $c = s + \frac{r - 1}{2}$; \\
    $b^2 - 4ac = r^2 - (2s - 1 + r)(2t - 1 + r) = -n$.
    
\end{enumerate}

 Let us verify that these quadratic forms are indeed reduced. For the first three categories, since $b > 0$ for all of them, we just need to show that $b \leq a \leq c$. For the latter three categories, since $b < 0$, we need to show that $|b| < a < c$.
\begin{enumerate}
     \item $r \leq s + \frac{r - 1}{2} \leq t + \frac{r - 1}{2}$ is equivalent to $r \leq 2s - 1\leq 2t - 1$,
    \item $2t - 1 \leq t + \frac{r - 1}{2} \leq t + s - 1$ is equivalent to $2t - 1 \leq r \leq 2s - 1$,
    \item $2s - 1 \leq t + s - 1 \leq s + \frac{r - 1}{2}$ is equivalent to $2s - 1 \leq 2t - 1 \leq r$,

    \item $2s - 1 < s + \frac{r - 1}{2} < t + s - 1$ is equivalent to $2s - 1 < r < 2t - 1$,
    \item $2t - 1 < s + t - 1 < s + \frac{r - 1}{2}$ is equivalent to $2t - 1 < 2s - 1 < r$,
    \item $r < t + \frac{r - 1}{2} < s + \frac{r - 1}{2}$ is equivalent to $r < 2t - 1 < 2s - 1$.
\end{enumerate}

Now, consider the union of the six injections above. The domain of this transform consists of all solution triples except for the one with $r = 2t - 1 = 2s - 1$, if it exists. It is easy to verify that the quadratic form $(a, b, c)$ with $a = b = c$, if it exists, is excluded from the range of this transform. The injections have simple linear inverse transformations. The set of preimages of a reduced quadratic form $(a, b, c)$ under this transformation can be easily found by applying the inverse transformations to $(a, b, c)$. As it turns out, any quadratic form $(a, b, c)$, except for the one with $a = b = c$, has exactly three preimages. Indeed, $b \neq 0$ because $n$ is odd. Since three of the injections yield quadratic forms with positive $b$ coefficients and three yield quadratic forms with negative $b$ coefficients, any $(a, b, c)$ lies in domain of exactly three of the six inverse transformations. Finally, it is easy to see that any two of the inverse transformations send a quadratic form to different solution triples.

If $(a, b, c) = (z, z, z)$ for some $z$ exists then $z^2 - 4z^2 = -n$, and $ax^2 + bxy + cy^2$ is a multiple of $x^2 + xy + y^2$. This happens iff there exists a solution triple with $r = 2s - 1 = 2t - 1$. In this case, we add this solution triple to the domain of our transformation and define the image of it to be $(z, z, z)$. We have thus built a surjective mapping for which any triple $(a, b, c)$ has exactly three times more preimages than its weight in $H(n)$. Thus, the number of solution triples $(r, s, t)$ with odd $r$ indeed equals $3H(n)$.

\medskip

\textbf{Case 4:} $n = 8m + 7$.

\medskip

The number of reduced quadratic forms of discriminant $-n$ equals $H(n)$ because $n = 8m + 7$ cannot be represented as $3k^2$ or $k^2$ and thus no multiple of $x^2 + xy + y^2$ or $x^2 + y^2$ has discriminant $-n$. Indeed, $8m + 7 \equiv 7 \pmod{8}$ but $k^2 \equiv 1 \pmod{8}$, $3k^2 \equiv 3 \pmod{8}$ if $k$ is odd. Similarly, the number of reduced quadratic forms of discriminant $-4n$ equals $H(4n)$. 

Let us divide the solution triples $(r, s, t)$ into four categories

\begin{enumerate}
    \item $r \ \vdots \ 2$, $s - t \centernot\vdots 2$, 
    \item $r \centernot\vdots 2$, $2s - 1 + r \ \vdots \ 4$, $2t - 1 + r \ \vdots \ 4$,
    \item $r \centernot\vdots 2$, $2s - 1 + r \centernot\vdots 4$, $2t - 1 + r \ \vdots \ 4$,
    \item $r \centernot\vdots 2$, $2t - 1 + r \centernot\vdots 4$, $2s - 1 + r \ \vdots \ 4$.
\end{enumerate}
Let us justify this division. It is easy to see that the categories do not intersect. Consider the equation $r^2 + n = (2s - 1 + r)(2t - 1 + r)$. For any solution triple $(r, s, t)$ with $r \ \vdots \ 2$ $r^2 + n \equiv 3 \pmod{4}$, therefore $2s - 1 + r \not\equiv 2t - 1 + r \pmod{4}$, which implies that $s - t \centernot\vdots 2$. From the same equation, if $r \centernot\vdots 2$ then $(2s - 1 + r)(2t - 1 + r) \ \vdots \ 8$. Therefore, $2s - 1 + r$ and $2t - 1 + r$ are both even and at least one of them should be divisible by $4$. Categories $(2) - (4)$ thus describe all possibilities for $2s - 1 + r$ and $2t - 1 + r$. 

We need to interpret the factor $(-1)^{r+s+t}$. It is easy to see that $r + s + t \centernot\vdots 2$ for all solution triples in category $(1)$. For all solution triples in category $(2)$ it holds that $2s - 2t \ \vdots \ 4$, this $s - t \ \vdots \ 2$ and $r + s + t \centernot\vdots 2$. And since $2s - 2t \centernot\vdots 4$ for all solution triples in categories $(3)$ and $(4)$, $s - t \centernot\vdots 2$ and $r + s + t \ \vdots \ 2$. Therefore, to calculate the LHS of the desired equation we need to subtract the size of the union of subsets $(1)$ and $(2)$ from the size of the union of subsets $(3)$ and $(4)$. We will achieve this by calculating sizes of each of the categories separately.

Recall the case of even $r$ for $n = 8m + 3$. We can repeat that argument for the case of even $r$ for $n = 8m + 7$. Just as in case of even $r$ for $n = 8m + 3$ we get that
\begin{equation*}
    H(4n) - H(n) - \frac{1}{2}\sigma_0(n) = H(n) - \frac{1}{2}\sigma_0(n).
\end{equation*}
The equation above holds due to identity \ref{equation:delta-7mod8}.

Recall the case of odd $r$ for $n = 8m + 3$. We can repeat that argument for the case of odd $r$ for $n = 8m + 7$. Just as in case of $n = 8m + 3$, the number of solution triples with odd $r$ equals $3H(n)$. From this argument it follows that every reduced quadratic form $(a, b, c)$ has exactly three preimages. If we prove that any two of the preimages come from different categories $(2) - (4)$, then we get that the sizes of categories $(2) - (4)$ are equal to each other and $H(n)$. 

Without loss of generality, assume that $b > 0$. Let us express $2s - 1 + r$ and $2t - 1 + r$ in terms of $a, b, c$ for each of the three preimages
\begin{enumerate}
    \item $a = s + \frac{r - 1}{2}$, $b = r$, $c = t + \frac{r - 1}{2}$; $2s - 1 + r = 2a, \ 2t - 1 + r = 2c$;
    \item $a =  t + \frac{r - 1}{2}$, $b  = 2t - 1$, $c = s + t - 1$; $2s - 1 + r = 2a + 2c - 2b$, $2t - 1 + r = 2a$;
    \item $a = t + s - 1$, $b = 2s - 1$, $c = s + \frac{r - 1}{2}$; $2s - 1 + r = 2c$, $2t - 1 + r = 2a + 2c - 2b$.
\end{enumerate}

Notice that in every instance $2s - 1 + r$ and $2t - 1 + r$ are equal to $2a$, $2c$ or $2a + 2c - 2b$. Since $b^2 - 4ac = -n = -8m - 7$, it follows that $4ac \ \vdots \ 8$, therefore at least one of $2a$ and $2c$ is divisible by $4$. And since $b \centernot\vdots 2$, it follows that exactly one out of $2a$, $2c$, $2a + 2c - 2b$ is not divisible by $4$. Thus, for exactly one of the above preimages both $2s - 1 + r$ and $2t - 1 + r$ are divisible by $4$; for exactly one of them $2s - 1 + r \centernot\vdots 4$ and $2t - 1 + r \ \vdots \ 4$; for exactly one of them $2t - 1 + r \centernot\vdots 4$ and $2s - 1 + r \ \vdots \ 4$. This is exactly what we needed to prove.

We have found sizes of each of the categories, and we can conclude that
\begin{equation*}\sum_{\substack{r, s, t \geq 1 \\ (2s - 1 +r)(2t - 1+r) = n + r^2}} (-1)^{r + s + t} = H(n) + H(n) - \left(H(n) + H(n) - \frac{1}{2}\sigma_0(n)\right) = \frac{1}{2}\sigma_0(n).\qedhere
\end{equation*}

\end{proof}

\section{Concluding remarks and questions}\label{section:conclusion}

The methods that we develop here appear related to those of Mordell \cite{Mord1}, where he counts non-negative solutions to (\ref{equation:HLM}).  However, our implementation is different than what one finds in \cite{Mord1}.  For example, we have to work with a weight term of $(-1)^{r+s+t}$, and our motivation comes from studying limiting cases  of a higher-dimensional Kronecker-type identity.

We finish with some questions.
\begin{enumerate}
    \item Do other specializations of Theorem \ref{theorem:result}, or Theorems $1.3$ and $1.4$ of \cite{Mo2017A}, allow us to count solutions to other quadratic forms?
    \item In the specializations of Theorem \ref{theorem:result} that we have carried out here and in \cite{Mo2017B}, the four summands on the right-hand side always reduces to a single quotient of theta functions.  In Section \ref{section:newAC} there is cancellation, and in \cite[Sections 3, 4]{Mo2017B} the four summands are scalar multiples of each other.  Does anything interesting happen when this is not the case?

\end{enumerate}
Also note that we did not actually obtain the generating function for the number of solutions to $n=x^2+2y^2+2z^2$, which is given by
\begin{equation*}
        \sum_{n=0}b(n)q^{n}=j(-q;q^2)j(-q^2;q^4)^2.
\end{equation*}
Instead, we obtained
\begin{equation*}
        \sum_{n=0}a(n)q^{n}=j(q;q^2)j(q^2;q^4)^2,
\end{equation*}
where the equality $|a(n)|=b(n)$ is easily seen by using (\ref{equation:j-split}) with $m=2$.

\end{document}